\newtheorem{theorem}{Theorem}
\numberwithin{theorem}{section}
\newtheorem{proposition}[theorem]{Proposition}
\newtheorem{lemma}[theorem]{Lemma}
\newtheorem{corollary}[theorem]{Corollary}
\newtheorem{definition}[theorem]{Definition}
\newtheorem{remark}[theorem]{Remark}
\newtheorem{problem}[theorem]{Problem}
\newtheorem{example}[theorem]{Example}
\newtheorem{question}{Question}
\date{}
\begin{document}
{
\title{Syzygies over the Polytope Semiring}
\author{Madhusudan Manjunath\footnote{Part of this work was done while the author was at the University of California, Berkeley where he was funded by a Feoder-Lynen Fellowship of the Humboldt Foundation and at the Queen Mary University of London where he was funded by an EPSRC grant. He was also supported by a Leibeniz Fellowship at the Mathematisches Forschungsintitut Oberwolfach and a Junior Research Fellowship at the Erwin Schr\"odinger International Institute for Mathematics and Physics.}}
\maketitle

{\bf Abstract:} Tropical geometry and its applications indicate a ``theory of  syzygies'' over polytope semirings.  Taking cue from this indication, we study a notion of syzygies over the polytope semiring. We begin our exploration with the concept of Newton basis, an analogue of Gr\"obner basis that captures the image of an ideal under the Newton polytope map. The image ${\rm New}(I)$ of a graded ideal $I$ under the Newton polytope map is a graded sub-semimodule of the polytope semiring. Analogous to the Hilbert series,  we define the notion of Newton-Hilbert series that encodes the rank of each graded piece of ${\rm New}(I)$.  We prove the rationality of the Newton-Hilbert series for sub-semimodules that satisfy a property analogous to Cohen-Macaulayness.  We define the notions of regular sequence of polytopes and syzygies of polytopes. We show an analogue of the Koszul property characterizing the syzygies of a regular sequence of polytopes. 

\section{Introduction}

The concept of Newton polytope  \cite[Chapter 4]{GelKapZel94} of a Laurent polynomial $f$  is a widely studied and useful concept that in many situations, captures important properties of the hypersurface defined by $f$. For an arbitrary subvariety of projective space, this construction is usually generalized to the Chow polytope \cite[Chapter 4]{GelKapZel94} associated to this subvariety.  We undertake a generalization of the Newton polytope in a different direction: given an ideal $I$ of the polynomial ring $\mathbb{K}[x_1,\dots,x_n]$ where $\mathbb{K}$ is a field of infinite cardinality, we associate a space of polytopes  ${\rm New}(I)$ to it. This space is the sub-semimodule of the polytope semiring generated by the Newton  polytope of every element in $I$.

{\bf Newton Basis and Newton-Hilbert Series:}
Let $\mathbb{K}$ be a field of infinite cardinality. Let $\mathcal{A}[n]$ be the polytope semiring whose elements are lattice polytopes with vertices in  $\mathbb{Z}^n_{\geq 0}$, addition $\oplus$ in this semiring is given by convex hull and multiplication is given by Minkowski sum $\odot$ along with the element $0_{\mathcal{A}}$ that is the additive identity and satisfies $P \odot 0_{\mathcal{A}}=0_{\mathcal{A}}$ for all polytopes in $\mathcal{A}$. The only vertex of the ``polytope'' $0_{\mathcal{A}}$ is $0_{\mathcal{A}}$ itself. Depending on the context, we sometimes denote the polytope semiring simply by $\mathcal{A}$.  

Let  ${\rm New}: \mathbb{K}[x_1,\dots,x_n] \rightarrow \mathcal{A}$ be the map taking a non-zero polynomial to its Newton polytope and zero to the element $0_{\mathcal{A}}$. 
We refer to this map as the Newton polytope map. Given an ideal $I$ of $\mathbb{K}[x_1,\dots,x_n]$, the image ${\rm New}(I)$ is a sub-semimodule of $\mathcal{A}$ (see Lemma \ref{Newsub_lem}) that we call the \emph{Newton semimodule} of $I$.  A \emph{Newton basis} of $I$ is a subset $S$ of $I$ such that the set $\{{\rm New}(f)\}_{f \in S}$ generates ${\rm New}(I)$.

\begin{example} \rm
Consider the irrelevant ideal $\mathfrak{m}=\langle x_1,\dots, x_n\rangle$ of $\mathbb{K}[x_1,\dots,x_n]$. The Newton semimodule ${\rm New}(\mathfrak{m})$ of $\mathfrak{m}$ consists of all polytopes in $\mathcal{A}$ except the lattice polytope defined by the origin. The set $\{x_1,\dots,x_n\}$ is a Newton basis for $\mathfrak{m}$. More generally, for a monomial ideal $M$ the monomial minimal generating set $\{m_1,\dots,m_r\}$ forms a Newton basis for $M$.  We shall see in Section \ref{newbas_sect},  Example \ref{infnewbas_ex} that an ideal need not have a finite Newton basis. \qed
\end{example}

The polytope semiring carries a natural $\mathbb{Z}$-grading where the $k$-th graded piece consists of all polytopes in $\mathcal{A} \setminus \{0_{\mathcal{A}}\}$ that are contained in the hyperplane $H_k=\{(x_1,\dots,x_n)|~\sum_i x_i=k\}$ (see Definition \ref{gradsubsem_def} for a precise definition of grading on $\mathcal{A}$).  Suppose that $I$ is a $\mathbb{Z}$-graded ideal then ${\rm New}(I)$ is naturally a $\mathbb{Z}$-graded sub-semimodule of $\mathcal{A}$ and each graded piece is a semigroup $\mathcal{A}_k$ under the operation $\oplus$. This semigroup $\mathcal{A}_k$ has a unique minimal generating set and this set has finite cardinality.

Let $\mathcal{M}$ be a graded sub-semimodule of $\mathcal{A}$. The $k$-th Newton-Hilbert coefficient  $h_{k,\mathcal{M}}^{\rm New}$ of $\mathcal{M}$ is defined as follows.

\begin{center}
$h_{k,\mathcal{M}}^{\rm New}:={\rm rank}(\mathcal{M}_k)$
\end{center}
 where ${\rm rank}(\mathcal{M}_k)$ is the number of minimal generators of $\mathcal{M}_k$, the $k$-th graded piece, as a semigroup (under the operation $\oplus$). This notion is in the same spirit as the Barvinok rank of a matrix \cite{DevSanStu03}.

The Newton-Hilbert Series of $\mathcal{M}$ is the following formal power series.

\begin{center}

$H^{\rm New}_{\mathcal{M}}(t)=\sum_{k=0}^{\infty} h_{k,\mathcal{M}}^{\rm New}t^k$

\end{center}

If $\mathcal{M}={\rm New}(I)$, we denote $H^{\rm New}_{\mathcal{M}}(t)$ simply as $H^{\rm New}_{I}(t)$.  In this context, a natural question is whether the Newton-Hilbert series and the Hilbert series $H_I(t)$ are equal.  For a monomial ideal $M$, we have $H^{\rm New}_{M}(t)=H_M(t)$. But in general we only have ``upper-semicontinuity"  i.e., 

\begin{center}
$h_{k,I}^{\rm New} \geq h_{k,I}$ for all $k$ (Corollary \ref{genlift_cor}).  
\end{center}
where $h_{k,I}^{\rm New}$ and $h_{k,I}$ are the $k$-th Newton-Hilbert and Hilbert coefficient of $I$ respectively.

\begin{example}
\rm
Let $I=\langle x_1-x_2, x_2-x_3,\dots,x_{n-1}-x_n\rangle$ be the toric ideal associated to the root lattice $A_{n-1}$ of type $A$. The Newton semimodule ${\rm New}(I)$ is a graded sub-semimodule and $({\rm New(I)})_1$ is minimally generated as a semigroup by the $\binom{n}{2}$ polytopes $\{C_{i,j}|~i \neq j,1 \leq  i,~j \leq n\}$ where $C_{i,j}$ is the convex hull of $e_i$ and $e_j$ and $e_k$ is the $k$-th standard basis vector of $\mathbb{R}^n$.  In particular, the first Newton-Hilbert coefficient of $I$ is $\binom{n}{2}$. Hence, unlike the case of commutative algebras, the $k$-th Newton-Hilbert coefficient of a sub-semimodule is not necessarily upper bounded by the corresponding Newton-Hilbert coefficient $\binom{n+k-1}{k}$ of the polytope semiring.  \qed
\end{example}

A general problem on Newton-Hilbert series is the following:

\begin{problem}
Classify power series that can occur as the Newton-Hilbert series of sub-semimodules of the polytope semiring.
\end{problem}

We show the rationality of Newton-Hilbert series for sub-semimodules satisfying a property analogous to Cohen-Macaulayness (see Definition \ref{ArtCM_def}).

\begin{theorem}{\bf (Rationality of Newton-Hilbert Series)}\label{ratnewhil_theo}
Let  $\mathcal{M}$ be a Cohen-Macaulay graded sub-semimodule of $\mathcal{A}$, then the Newton-Hilbert series of $\mathcal{M}$ is a rational function.
\end{theorem}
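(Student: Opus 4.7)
My approach mirrors the classical proof that the Hilbert series of a Cohen--Macaulay module is rational, but replaces vector-space dimensions with semigroup ranks under $\oplus$. I would begin by unpacking the Cohen--Macaulay hypothesis: according to Definition~\ref{ArtCM_def}, $\mathcal{M}$ admits a regular sequence of polytopes $P_1,\ldots,P_r \in \mathcal{A}$ of respective degrees $d_1,\ldots,d_r$ such that the quotient sub-semimodule $\mathcal{M}/(P_1,\ldots,P_r)\mathcal{M}$ is Artinian, meaning all but finitely many of its graded pieces are empty. An Artinian graded sub-semimodule has finitely many non-trivial graded pieces, each of finite rank, so its Newton--Hilbert series is automatically a polynomial in $t$.

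The core of the argument is the following recursion: for a graded sub-semimodule $\mathcal{N}$ and a polytope $P$ regular on $\mathcal{N}$ of degree $d$,
\[
H^{\rm New}_{\mathcal{N}/P\mathcal{N}}(t) \;=\; (1-t^d)\,H^{\rm New}_{\mathcal{N}}(t).
\]
Since $P \odot (Q_1 \oplus Q_2) = (P \odot Q_1) \oplus (P \odot Q_2)$, Minkowski sum by $P$ is a $\oplus$-semigroup homomorphism $\mathcal{N}_{k-d} \to \mathcal{N}_k$, and regularity forces it to be injective. Hence the image $P \odot \mathcal{N}_{k-d} \subseteq \mathcal{N}_k$ is a sub-semigroup with exactly $h^{\rm New}_{k-d,\mathcal{N}}$ minimal generators. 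I would then show that these minimal generators of the image coincide with those minimal generators of $\mathcal{N}_k$ which happen to lie in the image, so that the quotient $\mathcal{N}_k / (P \odot \mathcal{N}_{k-d})$ has exactly $h^{\rm New}_{k,\mathcal{N}} - h^{\rm New}_{k-d,\mathcal{N}}$ minimal generators. Summing the resulting identity against $t^k$ gives the displayed recursion.

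Iterating the recursion along the full regular sequence yields
\[
H^{\rm New}_{\mathcal{M}/(P_1,\ldots,P_r)\mathcal{M}}(t) \;=\; \prod_{i=1}^{r}(1-t^{d_i})\cdot H^{\rm New}_{\mathcal{M}}(t),
\]
using at each stage that $P_i$ is regular on $\mathcal{M}/(P_1,\ldots,P_{i-1})\mathcal{M}$ by definition of regular sequence. Since the left-hand side is a polynomial by the first paragraph, $H^{\rm New}_{\mathcal{M}}(t)$ is a rational function with denominator dividing $\prod_{i=1}^{r}(1-t^{d_i})$, as claimed.

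The step I expect to be the main obstacle is the second half of the rank identity: showing that the image $P \odot \mathcal{N}_{k-d}$ is ``$\oplus$-saturated'' inside $\mathcal{N}_k$, i.e., whenever an element of the image can be written as a convex hull $Q_1 \oplus \cdots \oplus Q_s$ of elements of $\mathcal{N}_k$, each $Q_i$ already lies in the image. One direction of the rank identity is immediate (a minimal generator of $\mathcal{N}_k$ that lies in the image is \emph{a fortiori} minimal within the image), but this saturation is the nontrivial converse, and it is the semiring analogue of the commutative-algebra fact that $fM$ is a pure submodule of $M$ when $f$ is regular. I would invoke here the Koszul-type characterization of syzygies of a regular sequence of polytopes advertised in the introduction: without that property, minimal generators of $\mathcal{N}_k$ could mix image and non-image summands in their convex-hull decompositions and break the clean subtractive rank formula.
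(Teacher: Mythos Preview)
Your proposal rests on a reading of Definition~\ref{ArtCM_def} that does not match the paper. In the paper, ``Artinian'' means $\mathcal{M}_k=\mathcal{A}[n]_k$ for all large $k$ (so the Newton--Hilbert series is eventually that of the polynomial ring, hence rational but \emph{not} a polynomial), and ``Cohen--Macaulay'' means there is a regular sequence of \emph{coordinate points} $(e_{i_1},\dots,e_{i_r})$ in the sense of Definition~\ref{tropregseq_def} such that the \emph{restriction} $\mathcal{M}^{(r)}=\mathcal{M}\cap e_{i_1}^{\perp}\cap\cdots\cap e_{i_r}^{\perp}$ is Artinian in $\mathcal{A}[n-r]$. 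There is no quotient $\mathcal{M}/P\mathcal{M}$ in the paper, no arbitrary degree~$d$ regular polytopes, and the base case is not ``finitely many nonzero graded pieces.'' So the recursion you set up, and the polynomial base case you invoke, are not the ones the theorem is actually about.

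More substantively, the obstacle you isolate---the ``$\oplus$-saturation'' needed for the subtractive rank identity---is exactly the point where the paper's proof diverges from yours. The paper does not prove saturation; it \emph{assumes} it, by hard-wiring the two conditions of Definition~\ref{tropregseq_def} into the notion of regular sequence on $\mathcal{M}$. Condition~(1), namely $e_i\odot\mathcal{M}_{k}=(\mathcal{M}_{k+1})_{\parallel e_i}$, gives ${\rm rank}((\mathcal{M}_k)_{\parallel e_1})=h^{\rm New}_{k-1}(\mathcal{M})$; Condition~(2), namely ${\rm rank}((\mathcal{M}_k)_{\perp e_{i_1}})={\rm rank}((\mathcal{M}^{(1)})_k)$, is precisely the saturation statement you were hoping to derive. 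With both conditions in hand the paper gets $h^{\rm New}_k(\mathcal{M})=h^{\rm New}_{k-1}(\mathcal{M})+h^{\rm New}_k(\mathcal{M}^{(1)})$ and hence $H^{\rm New}_{\mathcal{M}}(t)=H^{\rm New}_{\mathcal{M}^{(1)}}(t)/(1-t)$, then inducts on depth. Your plan to extract saturation from the weak Koszul property (Theorem~\ref{kosgenintro_thm}) will not work: that theorem concerns syzygies of a sequence on $\mathcal{A}$ itself and says nothing about how minimal $\oplus$-generators of $\mathcal{M}_k$ split across $(\mathcal{M}_k)_{\parallel e_i}$ and $(\mathcal{M}_k)_{\perp e_i}$. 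In short, the paper's argument has the same inductive shape you propose, but the key analytic step you flag as the main obstacle is absorbed into the hypothesis rather than proved.
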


 The Giansiracusa brothers \cite{GianGian13} study the notion of Hilbert polynomial of a tropical variety. For the toric ideal associated to the root lattice $A_n$, their Hilbert polynomial is different from the polynomial underlying the Newton-Hilbert series. In fact, the tropical Hilbert polynomial defined in \cite{GianGian13} coincides with the Hilbert polynomial of the underlying ideal.

{\bf Regular Sequences, Syzygies and Koszul Property for Polytopes:} We formulate homological notions such as regular sequences and syzygies over polytopes. The main challenge in formulating these notions over semirings is the lack of additive inverse. For instance, the concept of ``kernel of a map'' is not well-defined.

For polytopes $P_1,\dots,P_r \in \mathcal{A}$, let $C(P_1,\dots,P_r)$ denote the sub-semimodule in the polytope semiring generated by $P_1,\dots,P_r$.

\indent 
{\bf Regular Sequences:} A sequence $(P_1,\dots,P_r)$ of polytopes is called {\bf regular} if for every $i$ from two to $r$, we have the following property:
  \begin{center} $Q \odot P_i \notin  C(P_1,\dots,P_{i-1})$  for every $Q \notin C(P_1,\dots,P_{i-1})$  \end{center}

\begin{remark}
\rm
Our definition of regular sequences is motivated by the following definition of a regular sequence of elements in an integral domain: elements $r_1,\dots,r_k$ in an integral domain $R$ form a regular sequence if  for every $i$ from two to $k$, the element $r_i$ is a non-zero divisor of $R/\langle r_1,\dots,r_{i-1} \rangle$ i.e., $s \cdot r_i \notin \langle r_1,\dots,r_{i-1} \rangle$ for every $s \notin \langle r_1,\dots,r_{i-1} \rangle$.
\end{remark}


 
 
We construct examples of regular and non-regular sequences in Section \ref{regseq_sect}. In particular, the sequence of coordinate points is a regular sequence of polytopes. 
Next, we introduce the notion of polytope syzygies. 


\indent

{\bf Polytope Syzygies:}    A {\emph syzygy} of a sequence of polytopes $(P_1,\dots,P_r)$ is an $r$-tuple $(Q_1,\dots,Q_r)$ of elements in $\mathcal{A}$ such that $(P_1\odot Q_1,\dots,P_r \odot Q_r)$ satisfies the following property:  \begin{center} every vertex in $\oplus_{j \in [1,\dots,r]}(P_j\odot Q_j)$ is shared by at least two elements in $\{P_j \odot Q_j\}_{j \in [1,\dots,r]}$. \end{center}  We say that the polytope syzygy is $k$-dimensional if all the polytopes $Q_i$ have dimension exactly $k$.

 \begin{remark}
\rm  This definition is inspired by the notion of tropical linear dependence studied by Jensen and Payne \cite{JenPay14}. A collection $f^{\rm trop}_1,\dots,f^{\rm trop}_r$ of tropical polynomials in $n$-variables are said to be tropically dependent if there exist  $c_1,\dots,c_r \in \mathbb{R}$ such that for every ${\bf x} \in \mathbb{R}^n$ the minimum over $c_1+f^{\rm trop}_1({\bf x}),\dots,c_r+f^{\rm trop}_r({\bf x})$ is attained by at least two elements in $\{c_i+f^{\rm trop}_i({\bf x})\}_{i \in [1,\dots,r]}$. 
This definition is in the same spirit as the notion of tropical rank of a matrix studied in \cite{DevSanStu03}. 

Extending this definition to the notion of syzygy of tropical polynomials, a tuple $(g^{\rm trop}_1,\dots,g_r^{\rm trop})$ of tropical polynomials in $n$-variables is a tropical syzygy if  for every ${\bf x} \in \mathbb{R}^n$ the minimum over $f^{\rm trop}_1({\bf x}) \odot g^{\rm trop}_1({\bf x}),\dots,f^{\rm trop}_r({\bf x})\odot g^{\rm trop}_r({\bf x})$ is attained by at least two elements in $\{f^{\rm trop}_i({\bf x}) \odot g^{\rm trop}_i({\bf x})\}_{i \in [1,\dots,r]}$. In the language of Newton polytopes, this translates to the notion of polytope syzygies. \qed
\end{remark}

Every syzygy $(Q_1,\dots,Q_r)$ has a polytope $W=\oplus_{j \in [1,\dots,r]}(P_j \odot Q_j)$ associated to it. The $r$-tuple $(0_{\mathcal{A}},\dots,0_{\mathcal{A}})$ is always a syzygy since, the corresponding polytope is $0_{\mathcal{A}}$ and this is shared by every element in $\{P_j \odot 0_{\mathcal{A}}\}_{j \in [1,\dots,r]}$.  A natural question in this context is the following: 

\begin{problem}
Fix natural numbers $r,~k \geq 2$, classify polytopes that are associated to a syzygy of polytopes $(P_1,\dots,P_r)$ where $P_i $ are $k$-dimensional and are all distinct. 
\end{problem}

For $r=2$, the answer is precisely those polytopes with vertices in $\mathbb{Z}^n_{\geq 0}$ that are decomposable into a Minkowski sum of two polytopes, both of which have dimension $k$ and similarly, arbitrary polytopes associated to Koszul syzygies are precisely of this form. 
The triangular prism shown in Figure \ref{polysyz} is an example of a polytope associated to a one-dimensional polytope syzygy of three polytopes. We are not aware of a classification of such polytopes. 
For instance, are there numerical invariants that completely characterize this? How does this property depend on the geometry of the polytope? 

The set of all syzygies of $(P_1,\dots,P_r)$ form a semimodule of $\mathcal{A}$ (see Proposition \ref{syzsemimod_prop}). We denote this semimodule by ${\rm Syz}^1(P_1,\dots,P_r)$. The semimodule  ${\rm Syz}^1(P_1,\dots,P_r)$ is not necessarily finitely generated in general, as the following example shows.  

Consider the five polytopes $P_1,\dots,P_5$ where $P_1$ is the convex hull of $(0,0)$ and $(0,1)$ and $P_2$, $P_3$, $P_4$ and $P_5$ are the points $(0,0)$, $(0,1)$, $(1,0)$ and $(1,1)$ respectively. Consider the Minkowski sum of $P_1$ with any line segment joining $(0,0)$ and $(a,b)$  where $(a,b)$ is an integer vector with non-negative coordinates that is not a scalar multiple of $(0,1)$.  This is a parallelogram.  Syzygies of $(P_1,\dots,P_5)$ are obtained by translating the other four polytopes (these are points) to the vertices of this parallelogram.  If $(a,b)$ is a primitive vector, then the corresponding syzygy is a minimal generator of  ${\rm Syz}^1(P_1,\dots,P_5)$ and hence, ${\rm Syz}^1(P_1,\dots,P_5)$ is not finitely generated.

Given an $r$-tuple of polynomials, any syzygy between them  specializes to a polytope syzygy via the Newton polytope map. 

 \begin{proposition}\label{syz_prop}
 Let $(f_1,\dots,f_r)$ be an $r$-tuple of polynomials in $\mathbb{K}[x_1,\dots,x_n]$. Suppose that $(g_1,\dots,g_r)$ is a syzygy of $(f_1,\dots,f_r)$, then $({\rm New}(g_1),\dots,{\rm New}(g_r))$ is a polytope syzygy of $({\rm New}(f_1),\dots,{\rm New}(f_r))$.  \end{proposition}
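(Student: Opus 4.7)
The plan is to reduce the proposition to two standard facts about Newton polytopes: the multiplicativity $\operatorname{New}(fg) = \operatorname{New}(f)\odot\operatorname{New}(g)$ over a field, and the fact that the vertices of the convex hull of a finite union of lattice polytopes must themselves be vertices of at least one of the constituent polytopes. With these in hand, the syzygy relation $\sum_i f_i g_i = 0$ in $\mathbb{K}[x_1,\dots,x_n]$ will force cancellation among monomials that can only happen if each extreme monomial appears in at least two of the products $f_ig_i$.

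Concretely, I would set $W := \bigoplus_j \bigl(\operatorname{New}(f_j)\odot\operatorname{New}(g_j)\bigr)$ and observe that $W = \operatorname{conv}\bigl(\bigcup_j \operatorname{New}(f_jg_j)\bigr)$ by the multiplicativity recalled above (throwing away any $j$ with $f_jg_j=0$, which contribute $0_{\mathcal{A}}$ and do not affect the convex hull). Fix a vertex $v$ of $W$ and choose a linear functional $\ell$ whose maximum over $W$ is attained uniquely at $v$. Let $S:=\{j : v\in \operatorname{New}(f_jg_j)\}$. For each $j\in S$ the inclusion $\operatorname{New}(f_jg_j)\subseteq W$ implies that $\ell$ attains its maximum over $\operatorname{New}(f_jg_j)$ uniquely at $v$, so $v$ is a vertex of $\operatorname{New}(f_jg_j)$; in particular the coefficient $c_v^{(j)}$ of $x^v$ in $f_jg_j$ is nonzero.

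The final step is to rule out $|S|=1$. If $S=\{j_0\}$, then for every $i\neq j_0$ we have $v\notin\operatorname{New}(f_ig_i)$, so the coefficient of $x^v$ in $f_ig_i$ vanishes; therefore the coefficient of $x^v$ in the sum $\sum_i f_ig_i$ equals $c_v^{(j_0)}\neq 0$, contradicting the hypothesis that $(g_1,\dots,g_r)$ is a syzygy. Thus $|S|\geq 2$, which is exactly the condition that $v$ is shared by at least two of the summands $\operatorname{New}(f_j)\odot\operatorname{New}(g_j)$. I expect the only delicate point to be the bookkeeping when some $g_i$ or $f_i$ is zero so that $\operatorname{New}(f_ig_i)=0_{\mathcal{A}}$; this is handled cleanly by simply discarding those indices before forming the convex hull, since the definition of the semiring multiplication absorbs them.
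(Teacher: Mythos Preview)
Your argument is correct. The paper itself states this proposition without proof, adding only the remark that a stronger statement holds (concerning all lattice points of $W$, not just vertices); so there is no written proof in the paper to compare against. Your approach---using the multiplicativity ${\rm New}(fg)={\rm New}(f)\odot{\rm New}(g)$, picking a linear functional exposing a vertex $v$ of $W$, and arguing that if $v$ lay in only one ${\rm New}(f_jg_j)$ then the coefficient of $x^v$ in $\sum_i f_ig_i$ could not vanish---is precisely the natural argument the paper's remark gestures at, and your handling of the degenerate indices with $f_jg_j=0$ is the correct bookkeeping.
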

 
 In fact, a statement stronger than Proposition \ref{syz_prop} holds: every lattice point in $\oplus_{i} ({\rm New}(f_i) \odot {\rm New}(g_i))$ is shared by at least two elements in $\{{\rm New}(f_i) \odot {\rm New}(g_i))\}_i$. The notion of polytope syzygy only captures the ``convex part'' of this  property.

 A general strategy for proving results about syzygies of polynomials is to study (polytope) syzygies of their Newton polytopes and keep track of which polytope syzygies lift.

 The polytope $W$ associated to a polytope syzygy induces a natural equivalence between polytope syzygies. Two syzygies between the same collection of polytopes are said be \emph{equivalent} if their associated polytope is the same and the set of coordinates $k$ where $Q_k=0_{\mathcal{A}}$ is the same for both. In the following, we present some more examples of polytope syzygies: 
 
 \begin{enumerate}
 \item {\bf Koszul Syzygies:} For a pair of polytopes $(P_1,P_2)$, the pair $(P_2,P_1)$ is a syzygy with associated polytope $P_1 \odot P_2$. As in the case of commutative rings, we call this syzygy the \emph{Koszul syzygy} of $P_1$ and $P_2$. Similarly, for a collection $(P_1,\dots,P_r)$ of polytopes and for $1  \leq i<j \leq n$, the  $n$-tuple \begin{center} $K_{i,j}=(0_{\mathcal{A}},\dots,0_{\mathcal{A}},\underbrace{P_j}_{i},\dots,\underbrace{P_i}_{j},0_{\mathcal{A}},\dots,0_{\mathcal{A}})$ \end{center} is a syzygy with associated polytope $P_i \odot P_j$.
 \item \label{example_two} In general, a collection of polytopes has syzygies other than the Koszul syzygies.  Consider the triangular prism shown in Figure \ref{polysyz}. Consider its three quadrilateral faces and let $(P_1,P_2,P_3)$ be the three line segments shown by the pointed curves in Figure  \ref{polysyz}.   The triple $(Q_1,Q_2,Q_3)$, where $Q_2$ and $Q_3$ are the line segments corresponding to the edge of the quadrilateral adjacent to $P_2$ and $P_3$ respectively and $Q_1$ is adjacent to $P_1$ in the remaining quadrilateral, is a syzygy.  This is not a Koszul syzygy and is also not generated by Koszul syzygies.\end{enumerate}
 

{\bf Type of a Syzygy:} Let $(Q_1,\dots,Q_r)$ be a syzygy of polytopes $(P_1,\dots,P_r)$ with associated polytope $W=\oplus_{ j\in [1,\dots,r]}(P_j \odot Q_j)$, then the syzygy is said to be of {\emph type $k$ }, if $k$ is the minimum cardinality of a subset $S$ of $[1,\dots,r]$ such that $W=\oplus_{j \in S}(P_j \odot Q_j)$.

\begin{figure}
  \centering
  \includegraphics[width=8cm]{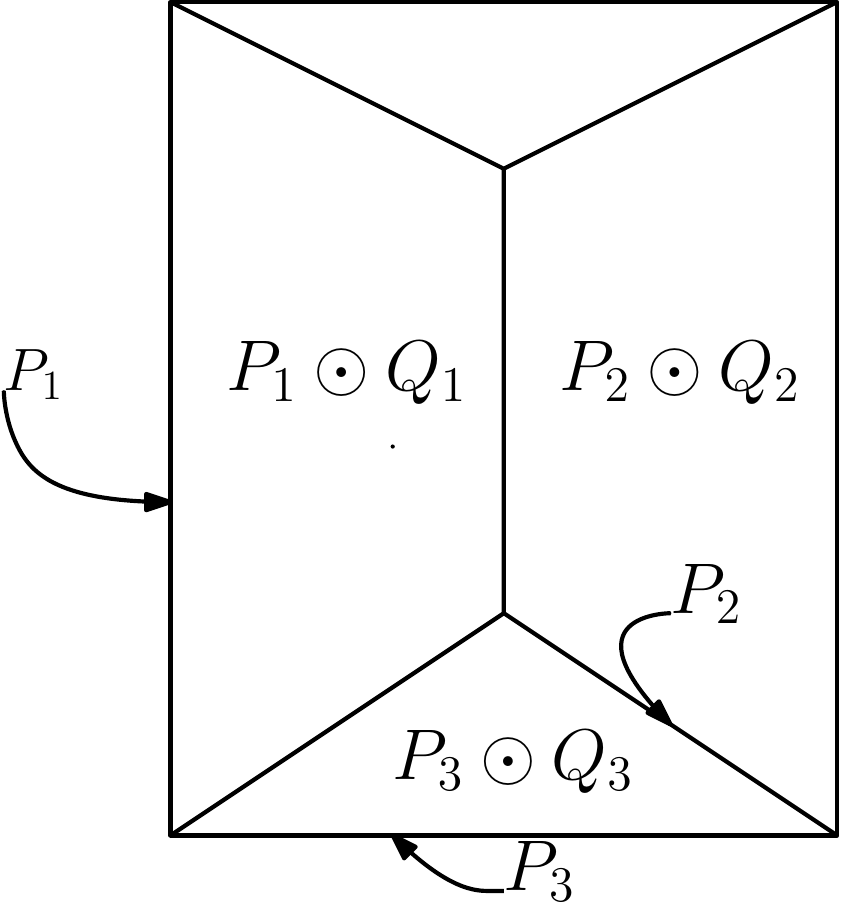}
  \caption{A Triangular Prism: A Polytope Associated with a (Polytope) Syzygy}\label{polysyz}
\end{figure}

Any Koszul syzygy is of type $1$, while the syzygy in Figure \ref{polysyz} is of type $2$. By definition, every syzygy of a collection of $r$ polytopes is of type at most $(r-1)$.

Characterizing syzygies of a collection of polytopes is a guiding question in this context.  In the case of commutative rings, we have the following characterization of syzygies of a regular sequence.

\begin{theorem}{\rm \bf(Koszul Property)}\cite[Proposition 2]{Ser00} For elements $t_1,\dots,t_r$  in a commutative ring $R$, let ${\rm Kos}(t_1,\dots,t_r)$ be the submodule of ${\rm Syz}^1(t_1,\dots,t_r)$ generated by the Koszul syzygies. The equality  ${\rm Syz}^1(t_1,\dots,t_r)={\rm Kos}(t_1,\dots,t_r)$ holds if and only if  $(t_1,\dots,t_r)$ form a regular sequence. \end{theorem}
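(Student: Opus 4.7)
The plan is to prove both implications by induction on $r$, with the Koszul syzygies $K_{j,k} := t_k e_j - t_j e_k$ for $j<k$ serving as the structural building blocks.

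For the forward direction, suppose $(t_1,\ldots,t_r)$ is regular and let $(s_1,\ldots,s_r) \in {\rm Syz}^1(t_1,\ldots,t_r)$. The relation $s_r t_r = -\sum_{j<r} s_j t_j$ places $s_r t_r$ in $\langle t_1,\ldots,t_{r-1}\rangle$, so regularity at step $r$ yields $s_r = \sum_{j<r} u_j t_j$ for some $u_j \in R$. Adjusting $(s_1,\ldots,s_r)$ by the Koszul combination $\sum_{j<r} u_j K_{j,r}$ produces a syzygy whose $r$-th coordinate vanishes; its first $r-1$ coordinates form a syzygy of the shorter regular sequence $(t_1,\ldots,t_{r-1})$ and, by the inductive hypothesis, lie in ${\rm Kos}(t_1,\ldots,t_{r-1}) \subseteq {\rm Kos}(t_1,\ldots,t_r)$. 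The base case $r=1$ is immediate since $t_1$ being a non-zero-divisor forces ${\rm Syz}^1(t_1)=0={\rm Kos}(t_1)$.

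For the reverse direction, I would argue by contrapositive: if regularity fails at the least index $i \geq 2$, choose $s \notin \langle t_1,\ldots,t_{i-1}\rangle$ with $s t_i = \sum_{j<i} v_j t_j$. Then $\sigma := (v_1,\ldots,v_{i-1},-s,0,\ldots,0)$ is a syzygy of $(t_1,\ldots,t_r)$, and the target is to show $\sigma \notin {\rm Kos}(t_1,\ldots,t_r)$. Writing a hypothetical expansion $\sigma = \sum_{j<k} a_{jk} K_{j,k}$ and reading the $i$-th coordinate produces $-s = \sum_{k>i} a_{ik} t_k - \sum_{j<i} a_{ji} t_j$, while the vanishing of coordinates $\ell > i$ supplies further relations among the $a_{jk}$.

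The main obstacle lies in completing the reverse direction: the coordinate equation above does not immediately yield $s \in \langle t_1,\ldots,t_{i-1}\rangle$, because the syzygies $K_{i,k}$ with $k > i$ can contribute terms outside that ideal. The cleanest resolution passes through the homological interpretation of the hypothesis: ${\rm Syz}^1 = {\rm Kos}$ is equivalent to $H_1(K_\bullet(t_1,\ldots,t_r))=0$, and the tensor-product factorization $K_\bullet(t_1,\ldots,t_r) \cong K_\bullet(t_1,\ldots,t_{r-1}) \otimes K_\bullet(t_r)$ furnishes a long exact sequence in homology whose connecting maps are multiplication by $t_r$. Chasing this sequence propagates $H_1 = 0$ into a non-zero-divisor statement for $t_r$ modulo $\langle t_1,\ldots,t_{r-1}\rangle$, and iterating the reduction on $r$ recovers the full regularity condition.
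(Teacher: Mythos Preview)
The paper does not prove this theorem: it is quoted verbatim as a classical result, with a citation to Serre's \emph{Local Algebra}, and serves only as motivation for the polytope analogue (Theorem~\ref{kosgen_thm}). There is therefore no proof in the paper to compare your proposal against.

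On the substance of your argument: the forward direction is the standard one and is correct. Your reverse direction, however, has a genuine gap at the step you label ``iterating the reduction on $r$.'' From the long exact sequence associated to $K_\bullet(t_1,\dots,t_r)\cong K_\bullet(t_1,\dots,t_{r-1})\otimes K_\bullet(t_r)$, the vanishing of $H_1(K_\bullet(t_1,\dots,t_r))$ does give injectivity of multiplication by $t_r$ on $H_0(K_\bullet(t_1,\dots,t_{r-1}))=R/\langle t_1,\dots,t_{r-1}\rangle$, so $t_r$ is a non-zero-divisor there. But to continue the induction you need $H_1(K_\bullet(t_1,\dots,t_{r-1}))=0$, and the long exact sequence only yields that multiplication by $t_r$ is \emph{surjective} on this module, not that it vanishes. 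Concluding vanishing from surjectivity is a Nakayama step: it requires the $t_i$ to lie in the Jacobson radical and the relevant homology to be finitely generated (e.g.\ $R$ Noetherian). These hypotheses are present in Serre's formulation but are omitted in the paper's statement; without them the iteration does not go through, and indeed the equivalence can fail for sequences not contained in the radical. If you intend to follow Serre, you should state those hypotheses explicitly and invoke Nakayama at the inductive step.
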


Let $(P_1,\dots,P_r)$ be a sequence of polytopes.  Let ${\rm Kos}(P_1,\dots,P_r)$ be the sub-semimodule of ${\rm Syz}^1(P_1,\dots,P_r)$ generated by the Koszul syzygies. Suppose that $(Q_1,\dots,Q_r)$ is a syzygy of polytopes $(P_1,\dots,P_r)$, let $W=\oplus_j(P_j \odot Q_j)$ be the polytope associated to it. Suppose that the syzygy is of type-I, then there is an index $i$ such that $W=P_i \odot Q_i$. The set of all  indices for which $W=P_i \odot Q_i$ is called the index set of the type-I syzygy. We show an analogous characterization of regular sequences of polytopes.

\begin{theorem}{\bf (Weak Koszul Property for Polytopes)}\label{kosgenintro_thm}
Let $(P_1,\dots,P_r)$ be a sequence of polytopes in $\mathcal{A}$. Every type-I syzygy $(Q_1,\dots,Q_r)$ of $(P_1,\dots,P_r)$ is equivalent to a type-I syzygy in ${\rm Kos}(P_1,\dots,P_r)$ whose index set contains the index set of $(Q_1,\dots,Q_r)$ if and only if $(P_1,\dots,P_r)$ is a regular sequence.
\end{theorem}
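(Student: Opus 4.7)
Since the statement is a biconditional, I would treat the two implications separately. The forward implication (from the Koszul-type-I property to regularity) is the more elementary one, and I would prove it by contrapositive. Assuming $(P_1,\dots,P_r)$ fails to be regular at some step $i$, there exist $Q \notin C(P_1,\dots,P_{i-1})$ with $Q \odot P_i = \bigoplus_{j<i} R_j \odot P_j \in C(P_1,\dots,P_{i-1})$. The $r$-tuple $(R_1,\dots,R_{i-1},Q,0_{\mathcal{A}},\dots,0_{\mathcal{A}})$ is then a type-I syzygy with associated polytope $W = Q \odot P_i$: the vertex-sharing condition holds because every vertex of $W$ is a vertex of $P_i \odot Q = W$ and also a vertex of some $P_j \odot R_j$, since $W = \bigoplus_{j<i}(P_j \odot R_j)$, and its index set contains $i$. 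If this were equivalent to a Koszul-generated type-I syzygy $G$ whose index set contains $\{i\}$, then the zero-pattern matching at positions $k > i$ would force every coefficient $T_{a,b}$ of a Koszul generator $K_{a,b}$ with $b > i$ to vanish. The type-I condition $P_i \odot G_i = W$ would then read $\bigoplus_{a<i} T_{a,i} \odot P_a \odot P_i = Q \odot P_i$, and Minkowski cancellation on $P_i$ would yield $Q = \bigoplus_{a<i} T_{a,i} \odot P_a \in C(P_1,\dots,P_{i-1})$, contradicting the choice of $Q$.

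For the reverse implication, I would argue by induction on $r$. The base case $r = 2$ is short: with only two terms, the vertex-sharing condition together with type-I forces $P_1 \odot Q_1 = P_2 \odot Q_2 = W$, so regularity at step $2$ gives $Q_2 = T \odot P_1$, and Minkowski cancellation on $P_1 \odot Q_1 = P_1 \odot (T \odot P_2)$ yields $Q_1 = T \odot P_2$, exhibiting the syzygy as $T \odot K_{1,2}$. For the inductive step, given a type-I syzygy $(Q_1,\dots,Q_r)$ with index set $I$, the case $Q_r = 0_{\mathcal{A}}$ reduces to the $(r-1)$-case on the initial segment $(P_1,\dots,P_{r-1})$ (which is regular, inheriting the defining conditions at steps $2,\dots,r-1$) by appending $0_{\mathcal{A}}$. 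The principal case is $r \in I$: then $W = P_r \odot Q_r$, and vertex-sharing with at least one $P_k \odot Q_k$ for $k \neq r$ forces $W = \bigoplus_{k<r}(P_k \odot Q_k) \in C(P_1,\dots,P_{r-1})$; regularity at step $r$ therefore produces a decomposition $Q_r = \bigoplus_{k<r} A_k \odot P_k$, and the natural Koszul-generated candidate is $G = \bigoplus_{k<r} A_k \odot K_{k,r}$, whose coordinates are $A_k \odot P_r$ at position $k<r$ and $Q_r$ at position $r$, with associated polytope $W$.

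The main obstacle will be to control the decomposition $Q_r = \bigoplus_{k<r} A_k \odot P_k$ so that it simultaneously (i) matches the zero pattern, requiring $A_k = 0_{\mathcal{A}}$ precisely when $Q_k = 0_{\mathcal{A}}$, and (ii) produces an index set $\supseteq I$ for $G$, requiring $A_k \odot P_k \odot P_r = W$ for each $k \in I \cap [1, r-1]$. My strategy is to exploit the vertex-sharing structure: indices $k$ with $Q_k = 0_{\mathcal{A}}$ contribute nothing to the vertex cover $W = \bigoplus_{k<r}(P_k\odot Q_k)$, so the decomposition of $Q_r$ can be refined to avoid such $P_k$, while for each $k \in I$ the equality $P_k \odot Q_k = W$ forces the corresponding $A_k$-slice $A_k \odot P_k \odot P_r$ to individually realize $W$. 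The remaining sub-case $r \notin I$ with $Q_r \neq 0_{\mathcal{A}}$ I would handle by a secondary induction on $|\{j > \max(I) : Q_j \neq 0_{\mathcal{A}}\}|$, reducing to the case $\max(I) = r$ treated above.
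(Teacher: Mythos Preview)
Your forward implication is essentially the paper's argument: both build the syzygy $(R_1,\dots,R_{i-1},Q,0_{\mathcal{A}},\dots,0_{\mathcal{A}})$ from a failure of regularity at step $i$, observe that any equivalent Koszul syzygy with index set containing $i$ must have $i$-th coordinate in $C(P_1,\dots,P_{i-1})$, and derive the contradiction $Q \in C(P_1,\dots,P_{i-1})$.

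For the reverse implication your inductive route diverges from the paper, and the part you flag as ``the main obstacle'' is a genuine gap, not just a technicality. Your two requirements on the decomposition $Q_r=\bigoplus_{k<r} A_k\odot P_k$ are exactly right, but neither follows from what you have. For (i), regularity at step $r$ only yields $Q_r\in C(P_1,\dots,P_{r-1})$; it does not force $Q_r\in C(\{P_k:k<r,\ Q_k\neq 0_{\mathcal{A}}\})$, so ``refining the decomposition to avoid such $P_k$'' is unjustified. For (ii), an arbitrary decomposition gives only $A_k\odot P_k\subseteq Q_r$, whereas you need the equality $A_k\odot P_k=Q_r$ (equivalently $A_k\odot P_k\odot P_r=W$) for every $k\in I\cap[1,r-1]$; nothing in the vertex-sharing condition singles out such a maximal choice of $A_k$. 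Your secondary induction for the case $r\notin I$, $Q_r\neq 0_{\mathcal{A}}$ is also incomplete: equivalence fixes the zero pattern, so you cannot simply drop $Q_r$ to decrease $|\{j>\max(I):Q_j\neq 0_{\mathcal{A}}\}|$, and no other reduction is indicated.

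The paper closes precisely this gap with the \emph{canonical solution} machinery of Subsection~\ref{eqnssub_subsect}. Working (in your notation) at an index $i$ in the index set, one takes the canonical solution $(L_j)_{j\neq i}$ to $Q_i=\bigoplus_{j\neq i} Y_j\odot P_j$, and simultaneously replaces $(Q_j)_{j\neq i}$ by the $P_i$-canonical solution to $W=\bigoplus_{j\neq i} X_j\odot P_j$. Because the canonical solution is the coordinatewise convex hull of all solutions, this replacement preserves $W$, preserves (in the full-support case) the zero pattern, and can only enlarge the index set---this is exactly your condition (ii), obtained for free by maximality. Lemma~\ref{cansol_lem} then identifies the $P_i$-canonical solution as $(L_j\odot P_i)_{j\neq i}$, which exhibits the resulting equivalent syzygy as $\bigoplus_{j\neq i} L_j\odot K_{i,j}\in{\rm Kos}(P_1,\dots,P_r)$. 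In short, the ``maximal decomposition'' you are groping for is the canonical solution, and Lemma~\ref{cansol_lem} is the lemma that makes it compatible with the Koszul generators; without invoking it (or reproving it), your inductive scheme does not close.
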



On the other hand, the collection $(P_1,P_2,P_3)$ of polytopes in Figure \ref{polysyz} is an example of a regular sequence of polytopes that has a syzygy (for instance, the syzygy in Figure \ref{polysyz}) that is not equivalent to any element in ${\rm Kos}(P_1,P_2,P_3)$.

\subsection{Motivation and Related Work}

The direction of developing linear algebra and algebraic geometry over polytope semirings was suggested by Speyer and Sturmfels in 2009 \cite{SpeStu09}. There is a substantial literature on linear algebra over the tropical semiring, see for example \cite{AkiGauGut08}. 

In this paper, we take first steps towards algebraic geometry over polytope semirings.  We briefly describe previous work that served as impetus for us. One thread was the work of Bayer and Eisenbud on graph curves in 1991 \cite{BayEis91}.  This paper was motivated by Green's conjecture on syzygies of a smooth proper algebraic curve. In particular, Bayer and Eisenbud formulate a conjecture for graph curves analogous to Green's conjecture on a smooth algebraic curve in its canonical embedding. They also proved their conjecture for graph curves where the underlying graph is planar. But, the general case is still open.  However, as they point out in their paper, their conjecture does not imply Green's conjecture. The reason is that the Clifford index of a graph curve of genus $g$ is too small (of the order $\log g$) compared to $\lceil (g-2)/2 \rceil$, the Clifford index of a general proper, smooth curve of genus $g$. This is an obstacle to carrying out standard degeneration arguments.  
 
 Over the past ten years, there has been significant progress in tropical geometry that opens up the possibility of using tropical curves instead of graph curves.  In particular, families of abstract tropical curves with Clifford index (defined in the sense of divisor theory of tropical curves) equal to that of a generic smooth curve have been constructed \cite{CooPayDraRob10}.  Hence, this family of  abstract tropical curves can substitute for graph curves provided that there is a notion of tropical (or polytope) syzygy that behaves ``well'' with respect to degeneration.  A goal of this paper is to serve as a first step in this direction. 


Polytope semirings (under the name polytope algebra) appeared in the book of Pachter and Sturmfels \cite[Chapter 2]{PacStu05} in the context of computational biology.  Semiring theory, in particular idempotent semiring theory, has been treated in several books and articles, see for example \cite{Gol13}. Linear algebra over semirings has been the focus of these works. 

Recent works of MacPherson \cite{Macfun15}, \cite{Macpro15} introduce an analogue of integral closure over idempotent semirings and a notion of projective modules over polyhedral semirings.  Our work is another step towards commutative algebra over semirings. Our results are over polytope semirings and do not seem to directly extend to arbitrary semirings. This is primarily because of our use of the Lebseque measure on the set of polytopes.  Other contexts where polytope semirings appear are  Litvinov \cite{Lit10},  Connes \cite[Page 19]{Con15}. Litvinov \cite{Lit10} emphasizes a correspondence principle between classical analysis and idempotent analysis. According to this principle, every result in classical analysis has an idempotent analogue. Theorem \ref{newhilrat_theo} and Theorem \ref{kosgen_thm} confirm a similar interplay between commutative rings and polytope semirings.

Another object related to polytope semirings is McMullen's polytope algebra \cite{Mcm89}. The polytope algebra is the vector space generated by all symbols of the form $[P]$ where $P$ is convex polytope in $\mathbb{R}^n$ along with the relations
\begin{center}$[P \cup Q]+[P \cap Q]=[P]+[Q]$ \end{center}
where $P \cup Q$ is a convex polytope and $[P+{\bf t}]=[P]$ for every ${\bf t} \in \mathbb{R}^n$. 
Multiplication is given by $[P] \cdot [Q]=[P \odot Q]$.

Addition in the polytope algebra seems to be more ``rigid'' than its counterpart in the polytope semiring and hence, the concept of syzygies in the polytope semiring does not seem to have a corresponding object in the polytope algebra. Syzygies in the polytope algebra have been considered in \cite{Mcm93} but we are not aware of a concrete relation between them and the polytope syzygies studied in this paper.

In a recent paper, Rowen \cite{Row16} introduces the concept of negation map on a semiring to handle the problem of the absence of additive inverses in a semiring.  He systematically uses the negation map to define (and sometimes recover) several notions of linear algebra over semirings, the most relevant to this paper is linear dependence. This raises the question of whether we can use Rowen's negation map to define our notion of syzygy in the polytope semiring. At the time of writing this paper, we are not aware of a negation map on the polytope semiring that can recover our notion of polytope syzygy. A simpler question is whether the notion of tropical linear dependence due to Jensen and Payne \cite{JenPay14} is an instance of linear dependence on the tropical semiring with a suitable negation map, we are also not aware of this.

{\bf Acknowledgements:} We thank Matt Baker, Spencer Backman, Alex Fink,  Christian Haase and Bernd Sturmfels for stimulating discussions on the topic of this paper. We thank the anonymous referee for several constructive suggestions.


\section{Basics of Polytope Semirings}\label{polysemibas_sect}

In this section, we document properties of polytope semirings that we employ throughout the rest of the paper. Let $I$ be an ideal in $\mathbb{K}[x_1,\dots,x_n]$ and consider a polynomial $p \in I$. Let ${\rm New}(p)$ be the image of $p$ under the Newton polytope map and ${\rm New}(I)=\{{\rm New}(p)|~p \in I\}$. By a sub-semimodule $\mathcal{M}$ of $\mathcal{A}$, we mean a subset of $\mathcal{A}$ that is also a semigroup under $\oplus$ and satisfies $\mathcal{A} \odot \mathcal{M} \subseteq \mathcal{M}$.

\begin{lemma}\label{Newsub_lem}
The set ${\rm New}(I)$ is a sub-semimodule of the polytope semiring $\mathcal{A}$.
\end{lemma}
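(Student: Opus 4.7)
The plan is to verify separately that ${\rm New}(I)$ is closed under the semiring addition $\oplus$ and under the action $\odot$ of $\mathcal{A}$. In each case I first dispatch the trivial situations involving $0_{\mathcal{A}}$, then reduce to a statement about avoiding unlucky coefficient cancellations, which succeeds because $\mathbb{K}$ has infinite cardinality. The two corresponding polynomials that witness the claim will be produced by choosing generic coefficients in $\mathbb{K}$.

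For closure under $\oplus$: given $p_1,p_2 \in I$ both nonzero with Newton polytopes $N_1,N_2$, I want some $g \in I$ with ${\rm New}(g) = \conv(N_1 \cup N_2) = N_1 \oplus N_2$. The natural candidate is $g = c_1 p_1 + c_2 p_2$, whose support is contained in ${\rm supp}(p_1) \cup {\rm supp}(p_2)$, with equality precisely when for every lattice point $\alpha \in {\rm supp}(p_1) \cap {\rm supp}(p_2)$ one has $c_1 (p_1)_{\alpha} + c_2 (p_2)_{\alpha} \neq 0$. Each such condition excludes a single proper hyperplane in $\mathbb{K}^2$, and since the intersection is finite and $\mathbb{K}$ is infinite, a pair $(c_1,c_2)$ outside the finite union of these hyperplanes exists. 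For any such choice, ${\rm supp}(g) = {\rm supp}(p_1) \cup {\rm supp}(p_2)$, hence ${\rm New}(g) = N_1 \oplus N_2$, and $g \in I$ because $I$ is an ideal.

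For closure under $\odot$: given $P \in \mathcal{A}$ with vertices $v_1,\dots,v_k$ and $N = {\rm New}(p) \in {\rm New}(I)$, both nonzero, I build a polynomial of the form $h = \sum_{i=1}^k c_i x^{v_i}$ such that ${\rm New}(hp) = P + N = P \odot N$. The inclusion ${\rm New}(hp) \subseteq P \odot N$ is automatic from the Minkowski-sum formula, so equality reduces to showing that the coefficient of $hp$ at each vertex $w$ of $P \odot N$ is nonzero. Each vertex $w$ of a Minkowski sum has a unique decomposition $w = v_i + u$ with $v_i$ a vertex of $P$ and $u$ a vertex of $N$; since $u \in {\rm supp}(p)$ we have $p_u \neq 0$, so the coefficient of $w$ in $hp$ is a nontrivial linear form in $c_1,\dots,c_k$. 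The finitely many non-vanishing conditions (one per vertex of $P \odot N$), together with $c_i \neq 0$ to force ${\rm New}(h) = P$, again cut out proper hyperplanes in $\mathbb{K}^k$, and a generic choice produces a valid $h$. Then $hp \in I$ and ${\rm New}(hp) = P \odot N$.

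The only real obstacle is wrapping up the generic-coefficient argument cleanly: I would state once, as a small lemma or remark, that a finite union of proper affine subspaces of $\mathbb{K}^m$ cannot cover $\mathbb{K}^m$ when $\mathbb{K}$ is infinite, and then invoke it in both parts. The boundary cases, $p_i = 0$ or $P = 0_{\mathcal{A}}$, are handled directly using ${\rm New}(0) = 0_{\mathcal{A}}$, the absorption rule $P \odot 0_{\mathcal{A}} = 0_{\mathcal{A}}$, and $0 \in I$, so that $0_{\mathcal{A}} \in {\rm New}(I)$ serves as the additive identity within the sub-semimodule. Together these two closure properties are exactly what is needed for ${\rm New}(I)$ to be a sub-semimodule of $\mathcal{A}$.
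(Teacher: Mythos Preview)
Your proof is correct and follows essentially the same approach as the paper: closure under $\oplus$ via a generic linear combination $c_1 p_1 + c_2 p_2$, and closure under $\odot$ by multiplying by a suitable polynomial with the given Newton polytope. One simplification worth noting: for the $\odot$ part you do not actually need genericity, since ${\rm New}(f_1 \cdot f_2) = {\rm New}(f_1) \odot {\rm New}(f_2)$ holds for \emph{any} nonzero $f_1,f_2$ (the unique-decomposition argument you give already shows the vertex coefficient is a single nonzero term $c_i p_u$, so taking all $c_i = 1$ works); the paper exploits this directly and reserves the generic-coefficient argument for $\oplus$ only.
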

\begin{proof}
Let $P_1={\rm New}(f_1)$ and $P_2={\rm New}(f_2)$ for $f_1,f_2 \in \mathbb{K}[x_1,\dots,x_n]$. We have $P_1 \odot P_2={\rm New}(f_1\cdot f_2)$. Furthermore, for generic choices of $\alpha_1,~\alpha_2 \in \mathbb{K}$, we have $P_1 \oplus P_2={\rm New}(\alpha_1 f_1 +\alpha_2 f_2)$. Since, $\mathbb{K}$ has infinite cardinality such generic choices of $\alpha_1$ and $\alpha_2$ exist. Hence, we conclude that ${\rm New}(I)$ is a semigroup under $\oplus$ and satisfies $\mathcal{A} \odot {\rm New}(I) \subseteq {\rm New}(I)$. Thus, ${\rm New}(I)$ is a sub-semimodule of $\mathcal{A}$.
\end{proof}

\subsection{Graded Sub-semimodules of the Polytope Semiring}
We start by making the notion of a graded sub-semimodule of $\mathcal{A}$ precise. Recall the grading on $\mathcal{A}$ defined in the introduction.

\begin{definition}\label{gradsubsem_def}
A sub-semimodule $\mathcal{M}$ of the polytope semiring is graded if it can be decomposed into a disjoint union of (possibly empty) semigroups $\mathcal{M}_k$ for $k \in \mathbb{Z}$ such that $\mathcal{M}_i \odot \mathcal{A}_j \subseteq \mathcal{M}_{i+j}$ and 
$\{\mathcal{M}_{k}\}_{k \in \mathbb{Z}}$ generates $\mathcal{M}$ as a semigroup under $\oplus$.
\end{definition}

\begin{proposition}
If $I$ is a $\mathbb{Z}$-graded ideal of $\mathbb{K}[x_1,\dots,x_n]$, then ${\rm New}(I)$ is graded as a sub-semimodule of $\mathcal{A}$ with the $k$-th graded piece $({\rm New}(I))_k={\rm New}(I_k \setminus \{0_{\mathbb{K}}\})$ where $0_{\mathbb{K}}$ is the additive identity of $\mathbb{K}$. 
\end{proposition}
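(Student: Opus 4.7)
The plan is to verify each requirement of Definition \ref{gradsubsem_def} for the proposed decomposition $\mathcal{M}_k := {\rm New}(I_k \setminus \{0_{\mathbb{K}}\})$. Concretely, I need to check four things: (a) $\mathcal{M}_k \subseteq \mathcal{A}_k$ (and hence the $\mathcal{M}_k$ are pairwise disjoint), (b) each $\mathcal{M}_k$ is closed under $\oplus$, (c) $\mathcal{M}_i \odot \mathcal{A}_j \subseteq \mathcal{M}_{i+j}$, and (d) the $\mathcal{M}_k$ collectively generate ${\rm New}(I)$ as a semigroup under $\oplus$. That ${\rm New}(I)$ is a sub-semimodule at all is already Lemma \ref{Newsub_lem}, so the work is entirely in matching the degree structure.

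For (a) and (b), note that any nonzero $f \in I_k$ is homogeneous of degree $k$, so every exponent vector of every monomial of $f$ lies on the hyperplane $H_k$, placing ${\rm New}(f)$ inside $H_k$ and hence inside $\mathcal{A}_k$. Since the hyperplanes $H_k$ are distinct, the $\mathcal{A}_k$ (and therefore the $\mathcal{M}_k$) are pairwise disjoint. For closure of $\mathcal{M}_k$ under $\oplus$, I would reuse the argument in the proof of Lemma \ref{Newsub_lem}: if $P_i = {\rm New}(f_i)$ with $f_i \in I_k \setminus \{0_{\mathbb{K}}\}$, then for generic $\alpha_1, \alpha_2 \in \mathbb{K}$ (which exist because $\mathbb{K}$ is infinite) we have $\alpha_1 f_1 + \alpha_2 f_2 \in I_k$, it is nonzero, and its Newton polytope equals $P_1 \oplus P_2$; thus $P_1 \oplus P_2 \in \mathcal{M}_k$.

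For (c), given $P = {\rm New}(f) \in \mathcal{M}_i$ with $f \in I_i \setminus \{0_{\mathbb{K}}\}$ and $Q \in \mathcal{A}_j$, I would realize $Q$ as the Newton polytope of the homogeneous polynomial $g := \sum_{v \in \text{vertices}(Q)} x^v$, which has degree $j$ because every vertex of $Q$ lies in $H_j \cap \mathbb{Z}^n_{\geq 0}$. Since $\mathbb{K}[x_1,\dots,x_n]$ is an integral domain, $fg \neq 0_{\mathbb{K}}$, and $fg \in I_{i+j}$ because $I$ is graded; the classical identity ${\rm New}(fg) = {\rm New}(f) \odot {\rm New}(g)$ then gives $P \odot Q \in \mathcal{M}_{i+j}$. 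For (d), take any $P = {\rm New}(f) \in {\rm New}(I)$ with $f \neq 0_{\mathbb{K}}$ and decompose $f = \sum_k f_k$ into its graded components, each $f_k \in I_k$; the nonzero $f_k$ have monomial supports in disjoint hyperplanes, so no cancellation occurs and $P = \oplus_{k: f_k \neq 0} {\rm New}(f_k)$ with each ${\rm New}(f_k) \in \mathcal{M}_k$.

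None of the steps involve deep new ideas; the only place where I would slow down is step (d), where it is important that the different $f_k$ have monomials of distinct total degrees so that the Newton polytope of their sum is literally the convex hull of the union of their supports. If I were worried about edge cases, the one to think about is the element $0_{\mathcal{A}}$, which lies outside every $\mathcal{A}_k$ by the convention in the introduction; but the definition only requires the $\mathcal{M}_k$ to generate ${\rm New}(I)$ as a semigroup under $\oplus$, and $0_{\mathcal{A}}$ is either the empty $\oplus$-sum or already absorbed, so it does not create an obstacle.
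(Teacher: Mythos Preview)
The paper states this proposition without proof, evidently regarding it as a routine verification; your argument correctly supplies the details by checking each clause of Definition~\ref{gradsubsem_def}, reusing the generic-coefficient trick from Lemma~\ref{Newsub_lem} where needed. There is nothing to compare, and no gap in your reasoning.
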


\subsection{Measures on Polytope Semirings}

Let $\mu$ be a monotonic measure of $\mathbb{R}^n$ i.e., a measure such that $\mu(S_1) \leq \mu(S_2)$ if $S_1 \subseteq S_2$, for instance the Lebseque measure on $\mathbb{R}^n$. Let $\mu(0_{\mathcal{A}})=0$.
 Proposition \ref{vol-prop} states that a monotonic measure $\mu$ on $\mathbb{R}^n$  is monotonic under linear combinations of elements of $\mathcal{A}$. This property serves as a ``substitute'' for the lack of additive inverse in several arguments, for instance in Proposition \ref{unigen_thm} and the proof of the Koszul property for polytopes.

\begin{proposition} \label{vol-prop}
Suppose that a polytope $P \in \mathcal{A}$ satisfies $P=\oplus_r Q_r$  for a (possibly infinite) family $\{Q_r\}$,  then $Q_r \subseteq P$ for every $Q_r$. Hence,  for any monotonic measure $\mu$ on $\mathbb{R}^d$ we have $\mu(P) \geq \mu(Q_r)$ for every $Q_r$. 
\end{proposition}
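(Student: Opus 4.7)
The plan is to unfold the definition of $\oplus$ and observe that convex hull is monotonic with respect to set inclusion, which makes the containment essentially automatic; the measure inequality is then an immediate application of the hypothesis that $\mu$ is monotonic on $\mathbb{R}^n$.

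First I would recall that the addition $\oplus$ in $\mathcal{A}$ is defined on polytopes $Q, Q'$ by $Q \oplus Q' = \conv(Q \cup Q')$, and more generally the assumed equality $P = \oplus_r Q_r$ should be read as $P = \conv(\bigcup_r Q_r)$ (with the convention that $0_{\mathcal{A}}$ contributes nothing to the union, and that if all summands equal $0_{\mathcal{A}}$ then $P = 0_{\mathcal{A}}$). Fix an arbitrary index $r_0$. Then every point $x \in Q_{r_0}$ lies in $\bigcup_r Q_r$, hence in the convex hull $\conv(\bigcup_r Q_r) = P$. This gives $Q_{r_0} \subseteq P$, which is the first conclusion.

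For the measure inequality, I would simply invoke monotonicity of $\mu$ applied to $Q_{r_0} \subseteq P$, obtaining $\mu(Q_{r_0}) \leq \mu(P)$. The edge case where some $Q_r = 0_{\mathcal{A}}$ is handled by the stipulation $\mu(0_{\mathcal{A}}) = 0$, since $0 \leq \mu(P)$ for any monotonic measure with $\mu(\emptyset) = 0$ (or more precisely, by treating $0_{\mathcal{A}}$ as contributing nothing to the union under $\oplus$).

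I do not anticipate any real obstacle here: the statement is essentially tautological once one observes that $\oplus$ is defined via convex hull and that taking the convex hull of a (possibly infinite) union of sets cannot shrink any individual member. The only small point worth flagging is to verify that the convention for $0_{\mathcal{A}}$ is consistent with the identity $P = \oplus_r Q_r$, since $0_{\mathcal{A}}$ is the additive identity and hence can be dropped from the $\oplus$-sum without changing $P$. The value of the proposition lies not in its difficulty but in the fact that it replaces the missing additive inverse with a monotonicity principle, which is then used repeatedly later in the paper.
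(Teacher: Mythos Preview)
Your proposal is correct. The paper itself states this proposition without proof, treating it as immediate from the definition of $\oplus$ as convex hull; your argument is exactly the natural unfolding one would supply, and there is nothing to compare against.
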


 In the following proposition, we use Proposition \ref{vol-prop} to show the uniqueness of minimal generating sets of sub-semimodules in $\mathcal{A}$.

\begin{proposition}\label{unigen_thm}
Every sub-semimodule $\mathcal{M}$ of the polytope semiring has a unique (but not necessarily finite) minimal generating set.  A graded sub-semimodule $\mathcal{M}$ has a unique (but not necessarily finite but countable) graded minimal generating set.
\end{proposition}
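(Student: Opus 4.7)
I would identify the unique minimal generating set intrinsically as the set of \emph{essential} elements of $\mathcal{M}$: call $P \in \mathcal{M}$ essential if every finite decomposition $P = \oplus_i A_i \odot P_i$ with $A_i \in \mathcal{A}$ and $P_i \in \mathcal{M}$ has $P_i = P$ for at least one index $i$, and let $N \subseteq \mathcal{M}$ be the collection of essential elements. If $G$ is any generating set of $\mathcal{M}$ and $P \in N$, writing $P = \oplus_i A_i \odot g_i$ with $g_i \in G$ forces some $g_i = P$, so $N \subseteq G$. Consequently, once $N$ is shown to generate $\mathcal{M}$, it is automatically the unique minimal generating set.

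The bulk of the work is to show that $N$ generates $\mathcal{M}$, and I plan to do this by well-founded induction on the lexicographic order on pairs $(f(P), s(P)) \in \N \times \N$, where $f(P) = |P \cap \Z^n|$ is the number of lattice points of $P$ and $s(P) = \sum_{v \in V(P)} \sum_{j=1}^{n} v_j$ is the total coordinate sum taken over the vertex set $V(P)$. Given $P \in \mathcal{M} \setminus N$, pick a decomposition $P = \oplus_i A_i \odot P_i$ with every $P_i \neq P$. Proposition \ref{vol-prop} gives $A_i \odot P_i \subseteq P$, and choosing any vertex $a$ of $A_i$ (which lies in $\Z^n_{\geq 0}$) yields $a + P_i \subseteq P$, so $f(P_i) \leq f(P)$. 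When this inequality is strict, descent in lex order is immediate. In the borderline case $f(P_i) = f(P)$, the sets $(a + P_i) \cap \Z^n$ and $P \cap \Z^n$ coincide; since every lattice polytope equals the convex hull of its lattice points, $a + P_i = P$ and hence $P_i = P - a$. Because $P_i \neq P$ we must have $a \neq 0$, and translation by a non-zero vector in $\Z^n_{\geq 0}$ strictly reduces $s$, so $s(P_i) < s(P)$. In either case $(f(P_i), s(P_i)) \prec (f(P), s(P))$, so by induction each $P_i \in \langle N \rangle$ and hence $P \in \langle N \rangle$.

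The step I expect to be the main obstacle is precisely the borderline case $f(P_i) = f(P)$: a volume- or lattice-point-count argument alone cannot force descent, because nothing prevents a polytope from reappearing as a Minkowski summand up to translation inside a decomposition. What breaks the potential loop is that the $P_i$ are \emph{lattice} polytopes with non-negative vertex coordinates, so coincidence of lattice-point sets upgrades to an actual lattice translation, and choosing the translation vector to be a non-zero vertex of $A_i$ strictly decreases the secondary invariant $s$.

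For the graded statement, if $P \in \mathcal{M}_k$ and $P = \oplus_i A_i \odot P_i$, then $A_i \odot P_i \subseteq P \subseteq H_k$ forces each $P_i$ to be homogeneous (with $A_i$ in the complementary degree), so the entire induction lives inside the homogeneous pieces and produces a unique minimal graded generating set $N^{\mathrm{gr}}$. Countability is automatic: a lattice polytope is determined by its finite vertex set in $\Z^n_{\geq 0}$, so $\mathcal{A}$ is countable as a set, and hence so are $\mathcal{M}$ and $N^{\mathrm{gr}}$.
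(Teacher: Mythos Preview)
Your argument is correct and takes a genuinely different route from the paper. The paper argues by contradiction using measure: assuming two distinct minimal generating sets $\{G_r\}$ and $\{H_r\}$, it picks $G_k \notin \{H_r\}$, writes $G_k = \oplus_r (P_r \odot H_r)$, and uses the Lebesgue (relative) measure together with Proposition~\ref{vol-prop} and Brunn--Minkowski to force $\mu(G_k) > \mu(H_r)$ for all contributing $H_r$; expressing each such $H_r$ back in terms of the $G_r$'s then cannot involve $G_k$, so $G_k$ is redundant, contradicting minimality. You instead \emph{construct} the minimal generating set intrinsically as the set $N$ of essential elements and prove $N$ generates by well-founded induction on the purely combinatorial invariant $(f(P),s(P))$, with the key observation that the borderline case $f(P_i)=f(P)$ forces $P_i$ to be a nonzero lattice translate of $P$, which drops $s$. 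Your approach is more constructive (it tells you exactly what the minimal generators are) and sidesteps the measure-theoretic subtleties the paper leans on (e.g.\ what ``relative measure'' means and why the inequality is strict when $G_k$ is not a translate of any $H_r$). The paper's approach, on the other hand, fits its recurring theme of using monotone measures as a surrogate for additive inverses, a device it reuses in the Koszul-property section. Your countability argument (that $\mathcal{A}$ itself is countable) is also slightly cleaner than the paper's, which appeals to finiteness of minimal generators in each graded piece.
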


\begin{proof}
Suppose that $\{G_r\}_{r \in S_1}$ and $\{H_r\}_{r \in S_2}$ are two distinct minimal generating sets of $\mathcal{M}$.  Suppose that $G_k \notin \{H_r\}_r$. Since $\{H_r\}_r$ is a generating set,  we can write 
\begin{equation}\label{lin-comb1}
G_k=\oplus_r (P_{r} \odot H_r )
\end{equation}
for $P_r \in \mathcal{A}$ over all $r$.  Note that we can assume that $G_k$ is not a translate of any element in $\{H_r\}_{r}$, otherwise this would contradict that $\{G_r\}_{r}$ is a minimal generating set.
Let $\mu$ be the Lebsegue measure on $\mathbb{R}^d$.   By the Brunn-Minkowski inequality, $\mu(P_r \odot H_r) \geq \mu(H_r)$ for all $r \in S$ and by Proposition \ref{vol-prop},  $\mu(G_k) \geq \mu(P_r \odot H_r) $ for all $r \in S$. Furthermore, suppose that $\mu$ is the relative measure of $G_k$, then we have $\mu(G_k)> \mu(H_r)$ for all $r \in S$.   On the other hand, each $H_r$ for $r \in S$ can also be written as an $\mathcal{A}$-linear combination of  $\{G_r\}_r$. Since $\mu(G_k)> \mu(H_r)$ for all $r \in S$,  none of these $\mathcal{A}$-linear combinations involve $G_k$. Combining these linear combinations with (\ref{lin-comb1}), we can write $G_k$ as an $\mathcal{A}$-linear combination of $\{G_r\}_{r} \setminus \{G_k\}$.  This contradicts our assumption that  $\{G_r\}_r$ is a minimal generating set of $\mathcal{M}$.
For a graded sub-semimodule, note that every minimal generating set is graded to conclude the uniqueness of a graded minimal generating set. The countability of a minimal generating set follows by observing that each graded piece has only a finite number of minimal generators.
\end{proof}

\begin{corollary}\label{genlift_cor} Let $I$ be a $\mathbb{Z}$-graded ideal of $\mathbb{K}[x_1,\dots,x_n]$. For every $k \in \mathbb{Z}$, the graded minimal generating set for $({\rm New}(I))_k$ (as a semi-group over $\oplus$) lifts to a generating set for the vector space $I_k$. Hence, for every $k \in \mathbb{Z}$, the $k$-th Newton-Hilbert coefficient of $I$ is at least its $k$-th Hilbert coefficient.
\end{corollary}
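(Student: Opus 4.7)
The plan is to prove the first (stronger) statement first: fix $k$, let $\{P_1,\dots,P_m\}$ be the minimal generating set of $({\rm New}(I))_k$ as an $\oplus$-semigroup (finite by Proposition \ref{unigen_thm}), and for each $i$ pick any $f_i \in I_k$ with ${\rm New}(f_i) = P_i$. I want to show that $\{f_1,\dots,f_m\}$ $\mathbb{K}$-linearly spans $I_k$; the inequality $h_{k,I} \le h_{k,I}^{\rm New}$ is then immediate from $\dim_{\mathbb{K}} I_k \le m$.

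The argument is by induction on $N(f) := |{\rm New}(f) \cap \mathbb{Z}^n|$ for nonzero $f \in I_k$. In the base case $N(f)=1$, so $f = c\, x^v$ for some $v$ and $c \ne 0$. Then $\{v\}\in ({\rm New}(I))_k$, and a single lattice point cannot be a nontrivial convex hull, so $\{v\}$ must itself be one of the minimal generators $P_i$; the corresponding $f_i$ is a nonzero scalar multiple of $x^v$, and $f$ is a $\mathbb{K}$-multiple of $f_i$.

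For the inductive step, write ${\rm New}(f) = P_{i_1} \oplus \cdots \oplus P_{i_s}$ and pick any vertex $v$ of ${\rm New}(f)$. Since $v$ is an extreme point of ${\rm Conv}\bigl(\bigcup_\ell P_{i_\ell}\bigr)$, it must be a vertex of some $P_{i_\ell}$, which forces the coefficient of $x^v$ in $f_{i_\ell}$ to be a nonzero scalar $c'$; let $c$ be the (nonzero, vertex) coefficient of $x^v$ in $f$. Set $g = f - (c/c')\,f_{i_\ell}$. Then $g \in I_k$, the support of $g$ lies in $({\rm New}(f) \cap \mathbb{Z}^n) \setminus \{v\}$ (because ${\rm New}(f_{i_\ell}) = P_{i_\ell} \subseteq {\rm New}(f)$ and the $x^v$-term has been killed), so either $g=0$ or $N(g) < N(f)$; in either case $g$ is a $\mathbb{K}$-linear combination of the $f_i$'s by induction, and hence so is $f$.

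The only real subtlety to verify carefully is the step "a vertex of $\oplus_\ell P_{i_\ell}$ is a vertex of some $P_{i_\ell}$" — this is the small convex-geometry lemma that extreme points of a convex hull of finitely many polytopes are extreme points of the union, which follows from the Krein--Milman-style fact that an extreme point of ${\rm Conv}(S)$ must lie in $S$ and be extreme there. Everything else (choice of representatives $f_i$, cancellation, finiteness of the generating set in each graded piece) is routine given the results already established, particularly Proposition \ref{unigen_thm}.
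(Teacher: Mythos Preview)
Your proof is correct and follows essentially the same approach as the paper's own argument: induction on the number of lattice points of ${\rm New}(f)$, with the inductive step peeling off a vertex $v$ by subtracting a scalar multiple of the generator $f_{i_\ell}$ whose Newton polytope contains $v$. If anything, you are slightly more careful than the paper in explicitly noting that $v$ must be a \emph{vertex} of $P_{i_\ell}$ (hence the $x^v$-coefficient of $f_{i_\ell}$ is nonzero), a point the paper leaves implicit.
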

\begin{proof}
Let $P_1,\dots,P_r$ be the minimal generating set of $({\rm New}(I))_k$. Consider polynomials $f_j \in I_k$ such that $P_j={\rm New}(f_j)$. We prove the statement by induction on the number of lattice points of ${\rm New}(g)$ where $g \in I_k \setminus \{0_{\mathbb{K}}\}$.  If the number of lattice points of  ${\rm New}(g)$ is one, then $g$ is a monomial and ${\rm New}(g)=P_j$ for some $j$. Hence, $g$ is equal to  $c \cdot f_j$ for some non-zero $c \in \mathbb{K}$. Assume that the statement is true for polynomials in $I_k$ whose Newton polytope contains $\ell \geq 1$ lattice points. Consider a  polynomial $g \in I_k$ such that ${\rm New}(g)$ has $(\ell+1)$ lattice points then, ${\rm New}(g)=\oplus_j {\rm New}(f_j)$. Consider any vertex $v \in {\rm New}(g)$, we know that $v$ is contained in some polytope $P_e$, say. Furthermore, ${\rm New}(f_e) \subseteq  {\rm New}(g) $. Hence, there exists an element $\beta \in \mathbb{K}$ such that ${\rm New}(g-\beta \cdot f_{e})$ contains at most $\ell$ lattice points and hence, by the induction hypothesis $g-\beta \cdot f_e \in I_k$. This implies that $g \in I_k$.  Hence, $\{f_j\}$ is a  generating set of $I_k$ and as a consequence, the $k$-th Newton-Hilbert coefficient of $I$ is at least its $k$-th Hilbert coefficient.
\end{proof}

\begin{corollary}\label{genlift_cor1} Let $I$ be a $\mathbb{Z}$-graded ideal of $\mathbb{K}[x_1,\dots,x_n]$.  The graded minimal generating set for ${\rm New}(I)$ (as a semimodule over $\mathcal{A}$) lifts to a generating set of $I$.  \end{corollary}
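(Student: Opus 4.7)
The plan is to mimic the inductive argument in the proof of Corollary~\ref{genlift_cor}, but to exploit the extra freedom of being allowed to multiply by arbitrary monomials (indeed, arbitrary polynomials), which is exactly what distinguishes the semimodule generating set of ${\rm New}(I)$ from the per-graded-piece semigroup generating sets handled in Corollary~\ref{genlift_cor}. Let $\{G_i\}_{i \in S}$ denote the graded minimal generating set of ${\rm New}(I)$ as a sub-semimodule of $\mathcal{A}$, and for each $i$ fix a polynomial $f_i \in I$ that is homogeneous of degree $\deg G_i$ with ${\rm New}(f_i) = G_i$. Because $I$ is $\mathbb{Z}$-graded, it suffices to show that every homogeneous $g \in I_k \setminus \{0\}$ lies in the ideal $J = \langle f_i : i \in S\rangle$, so I would reduce to the homogeneous case at the outset.

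Next I would argue by induction on the number $N$ of lattice points of ${\rm New}(g)$. For the base case $N = 1$, $g$ is a scalar times a monomial $x^v$, and since $\{v\} = \oplus_i (Q_i \odot G_i)$ forces, by Proposition~\ref{vol-prop}, every nonzero summand $Q_i \odot G_i$ to equal $\{v\}$, both $Q_i$ and $G_i$ are singletons $\{u\}$ and $\{w\}$ with $u+w = v$; then $f_i$ is a scalar multiple of $x^w$ and $g$ is a scalar multiple of $x^u f_i \in J$. For the inductive step, assume the claim for Newton polytopes with at most $\ell$ lattice points and take $g \in I_k$ with $|{\rm New}(g) \cap \mathbb{Z}^n| = \ell+1$. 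Decompose ${\rm New}(g) = \oplus_i (Q_i \odot G_i)$ in ${\rm New}(I)$, pick any vertex $v$ of ${\rm New}(g)$ with coefficient $c \neq 0$ in $g$, and choose an index $i$ so that $v$ lies in (hence is a vertex of) $Q_i \odot G_i$. Writing $v = u + w$ with $u$ a vertex of $Q_i$ and $w$ a vertex of $G_i = {\rm New}(f_i)$, the monomial $x^u f_i$ has a nonzero coefficient $c_i$ at $x^v$, and I would set $g' = g - (c/c_i)\, x^u f_i \in I$.

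The key verification, and what I expect to be the main technical point, is that ${\rm New}(g')$ has strictly fewer lattice points than ${\rm New}(g)$, so that the induction hypothesis applies to $g'$ and gives $g' \in J$, hence $g \in J$. For this, note that ${\rm New}(x^u f_i) = \{u\} \odot G_i \subseteq Q_i \odot G_i \subseteq {\rm New}(g)$ by Proposition~\ref{vol-prop}, so the support of $g'$ is contained in ${\rm supp}(g) \cup {\rm supp}(x^u f_i) \subseteq {\rm New}(g) \cap \mathbb{Z}^n$, while the coefficient at $v$ has been cancelled. Therefore ${\rm supp}(g') \subseteq ({\rm New}(g) \cap \mathbb{Z}^n) \setminus \{v\}$, and because $v$ is a vertex of ${\rm New}(g)$ its removal from the lattice-point set strictly shrinks the convex hull, giving at most $\ell$ lattice points in ${\rm New}(g')$. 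The subtle points are (i) guaranteeing the existence of an index $i$ for which $v$ is attained as a vertex of some $Q_i \odot G_i$, which follows from $v$ being a vertex of the $\oplus$-sum together with Proposition~\ref{vol-prop}, and (ii) ensuring that $w$ can be chosen as a vertex of $G_i$ so that the cancellation at $v$ actually occurs with a nonzero coefficient in $f_i$; both are handled by standard vertex-tracking in Minkowski sums. Finally, to address non-homogeneous $g \in I$, I would simply split into homogeneous components, each of which lies in $I$ by gradedness and in $J$ by the argument above.
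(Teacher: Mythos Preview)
Your proposal is correct and follows essentially the same approach as the paper: induction on the number of lattice points of ${\rm New}(g)$, choosing a vertex $v$, and subtracting a multiple of some $f_e$ to eliminate it. The only (minor) difference is that you subtract a single monomial multiple $x^u f_i$ to cancel $v$, whereas the paper subtracts $q_e\cdot f_e$ with ${\rm New}(q_e)=Q_e$; your choice makes the cancellation and the lattice-point count drop slightly more transparent.
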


\begin{proof}
Let $\{P_r\}_{r \in \mathbb{Z}}$ be the graded minimal generating set of ${\rm New}(I)$.  Consider polynomials $f_j \in I$ such that $P_j={\rm New}(f_j)$. We prove the statement by induction on the number of lattice points of ${\rm New}(g)$  where $g \in I$. If the number of lattice points of  ${\rm New}(g)$ is one then ${\rm New}(g)$  is a translate of $P_e$ for some $e$.  Hence, $g$ is a monomial multiplied by  $f_e$. Assume that the statement is true for polynomials in $I$ whose Newton polytope has $\ell \geq 1$ lattice points. Consider a  polynomial $g \in I$ with $(\ell+1)$ lattice points then, ${\rm New}(g)=\oplus_j ({\rm New}(f_j) \odot Q_j)$ with $Q_j \in \mathcal{A}$ for all $j$. Consider any vertex $v \in {\rm New}(g)$, we know that $v$ is contained in some summand ${\rm New}(f_e) \odot Q_e$, say.  Furthermore, ${\rm New}(f_e) \odot Q_e \subseteq  {\rm New}(g)$. Hence, there exists an element $q_e \in \mathbb{K}[x_1,\dots,x_n]$ such that ${\rm New}(q_e)=Q_e$ and ${\rm New}(g-q_e\cdot f_e)$ contains  at most $\ell$ lattice points (in particular, does not contain $v$) and by the induction hypothesis $g-q_e \cdot f_e \in I$. This implies that $g \in I$.
\end{proof}

\subsection{Minkowski Semigroup}


The set of lattice polytopes with vertices in $\mathbb{Z}^n_{\geq 0}$ with Minkowski sum as the operation is a semigroup \cite{Sch93}.  We refer to this semigroup as the Minkowski semigroup.  The Minkowski semigroup is commutative i.e., $P \odot Q=Q \odot P$ for every pair $P,~Q$ and cancellative i.e., $Q_1 \odot P_1=Q_1 \odot P_2$ implies that $P_1=P_2$. 

An {\bf irreducible} polytope $P$ in the Minkowski semigroup is a polytope that cannot be written as a Minkowski sum of two polytopes in the Minkowski semigroup neither of which is a point. In particular, every point is a {\bf irreducible} polytope. 

\begin{proposition}\label{uniirr_theo}
Every polytope in the Minkowski semigroup can be written, up to rearrangement, as a  product of irreducible polytopes. This product is not necessarily unique.
\end{proposition}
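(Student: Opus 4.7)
The plan is to establish existence by strong induction on a strictly additive, non-negative integer invariant that vanishes exactly on points, and to dispose of the uniqueness statement by an explicit small counterexample. For the invariant I would use
\[
g(P) := \sum_{i=1}^{n}\Big(\max_{v \in P} v_i - \min_{v \in P} v_i\Big),
\]
the sum of the side lengths of the smallest axis-parallel box containing $P$. Because $P \in \mathcal{A}$ has vertices in $\mathbb{Z}^n_{\geq 0}$, the quantity $g(P)$ is a non-negative integer, and $g(P)=0$ if and only if $P$ is a single lattice point. The key property driving the induction is that $g$ is additive under $\odot$: for each coordinate direction both $\max$ and $\min$ split as sums over the two factors, hence $g(P_1 \odot P_2) = g(P_1) + g(P_2)$.

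Given this invariant the induction is routine. If $g(P)=0$ then $P$ is a point, which is irreducible by definition, giving the base case. For the inductive step, if $P$ is irreducible there is nothing to prove. Otherwise write $P = P_1 \odot P_2$ with neither $P_i$ a point; a non-point lattice polytope has integer extent at least $1$ in some coordinate direction, so $g(P_1),g(P_2) \geq 1$, and additivity then forces $g(P_1),g(P_2) < g(P)$. The inductive hypothesis furnishes factorizations of $P_1$ and $P_2$ into irreducibles, and concatenating them yields the desired factorization of $P$. The only place that requires any real care is the choice of a strictly monotone, additive, integer-valued invariant; once $g$ is fixed the rest of the argument is mechanical, so I do not foresee a substantive obstacle.

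For the non-uniqueness assertion I would exhibit the lattice hexagon $H = \conv\{(0,0),(1,0),(2,1),(2,2),(1,2),(0,1)\}$. On one hand $H = I_x \odot I_y \odot I_d$, where $I_x = \conv\{(0,0),(1,0)\}$, $I_y = \conv\{(0,0),(0,1)\}$ and $I_d = \conv\{(0,0),(1,1)\}$ are three primitive lattice segments. On the other hand $H = T_1 \odot T_2$, where $T_1 = \conv\{(0,0),(1,0),(1,1)\}$ and $T_2 = \conv\{(0,0),(1,1),(0,1)\}$ are two unit lattice triangles; both products can be checked by a direct vertex-sum computation. The segments are irreducible as they are primitive, and each $T_i$ is irreducible because all three of its edges have primitive lattice length one, so any non-trivial Minkowski factor would contribute zero to at least one edge direction and therefore collapse to a point. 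This produces two genuinely distinct factorizations of $H$ into irreducibles, proving the non-uniqueness half of the proposition.
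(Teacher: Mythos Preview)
The paper states Proposition~\ref{uniirr_theo} without proof, so there is nothing to compare against directly; your argument supplies what the paper omits, and it is essentially correct.

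Your existence proof via the invariant $g(P)=\sum_i(\max_{v\in P}v_i-\min_{v\in P}v_i)$ is clean: the additivity $g(P_1\odot P_2)=g(P_1)+g(P_2)$ is immediate from the additivity of $\max$ and $\min$ under Minkowski sum, integrality and non-negativity hold because the polytopes are lattice, and $g$ vanishes exactly on points. The strong induction then goes through without issue.

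For non-uniqueness your hexagon example is the right one (in fact the same hexagon appears later in the paper, in Section~\ref{regseq_sect}, in a different role). Both factorizations $H=I_x\odot I_y\odot I_d$ and $H=T_1\odot T_2$ check out by direct vertex computation, and the primitive segments are clearly irreducible. The one place that deserves another sentence is the irreducibility of the triangles: the clause ``would contribute zero to at least one edge direction and therefore collapse to a point'' is not literally valid, since a segment contributes zero to two of the three edge directions without being a point. What actually makes the argument work is that the three primitive edge vectors of $T_i$ sum to zero while no proper non-empty subset of them does; since each edge has lattice length one it must go entirely to one summand, so any Minkowski decomposition partitions the three edge vectors into two groups each summing to zero, forcing one group to be empty. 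Stating it this way closes the gap. Alternatively, an area argument (a non-point lattice summand forces $\mathrm{area}(A\odot B)>1/2$) gives the same conclusion.
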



\subsection{Equations over the Polytope Semiring}\label{eqnssub_subsect}

Fix polytopes $P_1,\dots,P_r$ and $W$ in $\mathcal{A}[n]$. Consider the equation

\begin{equation}\label{semigroup_lem}
W=P_1 \odot Y_1 \oplus P_2 \odot Y_2 \oplus \cdots \oplus P_r \odot Y_r
 \end{equation}
 
 where each $Y_i$ is an element in $\mathcal{A}[n]$.  We ask the following questions:

 \begin{enumerate}
 \item Does this equation have a solution? 
 \item Are there finitely many solutions? 
 \item If yes, can we count them?
 \end{enumerate}
 
 
 Let $\mathcal{M}_W$ be the set of all solutions to this equation. The set $\mathcal{M}_W$ forms a semigroup under coordinate-wise addition.   The following lemma asserts that $\mathcal{M}_W$ is a finite set. 
 
 \begin{lemma}\label{eqfin_lem} The set $\mathcal{M}_W$ is finite. \end{lemma}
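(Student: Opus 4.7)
The plan is to convert the semiring equation $W = \bigoplus_{j=1}^r (P_j \odot Y_j)$ into a geometric containment using Proposition \ref{vol-prop}, and then bound each $Y_i$ to a finite set of lattice polytopes inside a bounded region. I will tacitly assume each $P_i \neq 0_{\mathcal{A}}$, since if some $P_i = 0_{\mathcal{A}}$ then the corresponding $Y_i$ is completely unconstrained and the claim fails in that setting; the natural reading of the problem is that the $P_i$ are honest polytopes.

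First, fix any solution $(Y_1,\dots,Y_r) \in \mathcal{M}_W$. Proposition \ref{vol-prop} applied to the decomposition $W = \bigoplus_{j=1}^r (P_j \odot Y_j)$ immediately yields $P_i \odot Y_i \subseteq W$ for every $i$. I would then split into cases: the option $Y_i = 0_{\mathcal{A}}$ is a single admissible choice (with $P_i \odot Y_i = 0_{\mathcal{A}}$, which contributes nothing to the sum). Otherwise, pick any vertex $v_i$ of $P_i$; then $v_i + Y_i \subseteq P_i \odot Y_i \subseteq W$ as subsets of $\mathbb{R}^n$, so $Y_i \subseteq W - v_i$.

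Next, since $W$ is a bounded lattice polytope with vertices in $\mathbb{Z}^n_{\geq 0}$, the translate $W - v_i$ is a bounded subset of $\mathbb{R}^n$, so $(W - v_i) \cap \mathbb{Z}^n_{\geq 0}$ is a finite set of lattice points. The polytope $Y_i$ is determined by its vertex set, which (because $Y_i \in \mathcal{A}[n]$) is a subset of this finite set; hence there are only finitely many possibilities for each $Y_i$. The total number of tuples in $\mathcal{M}_W$ is therefore bounded by a finite product, proving the lemma.

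I do not anticipate a substantive obstacle. The one mildly delicate point is the $0_{\mathcal{A}}$ bookkeeping, namely separating out $Y_i = 0_{\mathcal{A}}$ from the case $Y_i \neq 0_{\mathcal{A}}$ where the Minkowski containment argument applies and a vertex of $P_i$ has to be fixed to pass from Minkowski containment to ordinary set containment. After that, the proof is just Proposition \ref{vol-prop} combined with the elementary fact that a bounded region contains only finitely many points of $\mathbb{Z}^n_{\geq 0}$.
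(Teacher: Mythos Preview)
Your proposal is correct and follows essentially the same approach as the paper: the paper's proof also observes $P_i \odot Q_i \subseteq W$, then fixes a vertex $v$ of $P_i$ and notes that the vertices of $Q_i$ must lie among the lattice points of $W$ translated by $-v$. Your version adds a little more bookkeeping (the $0_{\mathcal{A}}$ case and the explicit assumption $P_i \neq 0_{\mathcal{A}}$), but the core argument is identical.
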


  \begin{proof} Let $(Q_1,\dots,Q_r)$ be a solution to Equation (\ref{semigroup_lem}).   We have $P_i \odot Q_i \subseteq W$.  Since $P_i$ and $W$ are lattice polytopes,  there are only a finitely many choices for the polytope $Q_i$. 
  More precisely, if $v$ is a vertex of $P_i$ then the vertices of $Q_i$ are a subset of lattice points of $W$ translated by $-v$.
   \end{proof}

 When $\mathcal{M}_W \neq \emptyset$, Lemma \ref{eqfin_lem} allows us to define \emph{the canonical solution} $\mathcal{C}_W$ to the Equation (\ref{semigroup_lem}) obtained by summing (in other words, taking convex hull) over all elements in $\mathcal{M}_W$.  Furthermore, if we fix another polytope $V$ and consider the set $\mathcal{M}_{W,V}$ of all solutions to Equation (\ref{semigroup_lem}) such that $V$ is a Minkowski summand of each element $Y_i$. This set of solutions is also finite and when $\mathcal{M}_{W,V} \neq \emptyset$,  the \emph{canonical solution} $\mathcal{C}_{W,V}$ with respect to $V$ is  defined as the sum of all elements in $\mathcal{M}_{W,V}$. 
 
 \begin{lemma}\label{cansol_lem}
 Suppose that $V$ is a Minkowski summand of $W$ and let $W=V \odot U$. Assume that $\mathcal{M}_U \neq \emptyset$, the canonical solution $\mathcal{C}_{W,V}$ is equal to $\mathcal{C}_U \odot V$ where $\mathcal{C}_U \odot V$ is the term-wise Minkowski sum of the canonical solution to the equation $U=Y_1 \odot P_1  \oplus \dots  \oplus Y_r \odot P_r$ with $V$.
\end{lemma}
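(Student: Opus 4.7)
The plan is to establish a bijection between $\mathcal{M}_U$ and $\mathcal{M}_{W,V}$ given by coordinate-wise Minkowski product with $V$, and then use distributivity of $\odot$ over $\oplus$ to transfer the coordinate-wise $\oplus$-sum defining the canonical solution.

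First I would verify that the map $\Phi\colon \mathcal{M}_U \to \mathcal{M}_{W,V}$ defined by $(Z_1,\dots,Z_r) \mapsto (V \odot Z_1,\dots,V \odot Z_r)$ is well-defined. Indeed, if $(Z_1,\dots,Z_r)$ solves $U = \bigoplus_{i} P_i \odot Z_i$, then applying $V \odot (\cdot)$ and using that Minkowski sum distributes over convex hull gives
\begin{equation*}
W = V \odot U = V \odot \bigoplus_{i} (P_i \odot Z_i) = \bigoplus_{i} P_i \odot (V \odot Z_i),
\end{equation*}
and each $V \odot Z_i$ obviously has $V$ as a Minkowski summand, so $\Phi(Z_1,\dots,Z_r) \in \mathcal{M}_{W,V}$.

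Next I would show $\Phi$ is a bijection. Given $(Y_1,\dots,Y_r) \in \mathcal{M}_{W,V}$, write $Y_i = V \odot Z_i$ using the hypothesis that $V$ is a Minkowski summand of each $Y_i$. Since the Minkowski semigroup is cancellative, the $Z_i$ are uniquely determined. Substituting and factoring $V$ back out yields $V \odot U = V \odot \bigoplus_i (P_i \odot Z_i)$, and cancellation of $V$ gives $(Z_1,\dots,Z_r) \in \mathcal{M}_U$. This shows $\Phi$ is a bijection with inverse $(Y_1,\dots,Y_r) \mapsto (Z_1,\dots,Z_r)$.

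Finally, for the canonical solution, I would compute coordinate-wise. The $i$-th coordinate of $\mathcal{C}_{W,V}$ is
\begin{equation*}
\bigoplus_{(Y_1,\dots,Y_r) \in \mathcal{M}_{W,V}} Y_i = \bigoplus_{(Z_1,\dots,Z_r) \in \mathcal{M}_U} (V \odot Z_i) = V \odot \bigoplus_{(Z_1,\dots,Z_r) \in \mathcal{M}_U} Z_i,
\end{equation*}
where the first equality uses the bijection $\Phi$ and the second uses distributivity of $\odot$ over $\oplus$ once more. The right-hand side is precisely the $i$-th coordinate of $V \odot \mathcal{C}_U$, completing the proof. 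The main (mild) obstacle is ensuring that the factoring $Y_i = V \odot Z_i$ is well-defined and that the $Z_i$ combine to a genuine element of $\mathcal{M}_U$; both reduce to cancellativity of the Minkowski semigroup and distributivity, no further input being required.
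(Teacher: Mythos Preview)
Your proof is correct and uses the same essential ingredients as the paper's argument: cancellativity of the Minkowski semigroup and distributivity of $\odot$ over $\oplus$. The paper only records one direction of your bijection explicitly (that $V\odot\mathcal{M}_U\subseteq\mathcal{M}_{W,V}$), then writes $\mathcal{C}_{W,V}=(R_1,\dots,R_r)$ with $R_i=L_i\odot V$ and argues by contradiction that $(L_1,\dots,L_r)=\mathcal{C}_U$ via maximality of the canonical solution; your version makes the full bijection $\mathcal{M}_U\leftrightarrow\mathcal{M}_{W,V}$ explicit and reads off the equality of canonical solutions by a direct coordinate-wise computation, which is arguably cleaner but not materially different.
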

\begin{proof}
The Minkowski sum of any element of $\mathcal{M}_U$ with $V$ is an element in $\mathcal{M}_{W,V}$. Furthermore, if $\mathcal{C}_{W,V}=(R_1,\dots,R_r)$ and suppose that $R_i=L_i \odot V$ for each $i$ from $1$ to $r$. We claim that $(L_1,\dots,L_r)$ is the canonical solution to $U=Y_1 \odot P_1  \oplus \dots  \oplus Y_r \odot P_r$. Otherwise, the canonical solution $\mathcal{C}_U=(L'_1,\dots,L'_r)$ satisfies $L_i \subseteq L'_i$ for every $i$ and hence $R_i=V \odot L_i \subseteq V\odot L'_i$. By the cancellative property of the Minkowski semigroup, $R_j=V \odot L_j \subset V\odot L'_j$ for some $j$ and this contradicts the uniqueness of the canonical solution with respect to $V$. 

\end{proof}

\section{Newton-Hilbert Series}

In this section, we prove Theorem \ref{ratnewhil_theo} of the introduction i.e., the rationality of the Newton-Hilbert series of a graded sub-semimodule of the polytope semiring that satisfies a property analogous to Cohen-Macaulayness.  We start by recalling proofs for rationality of the Hilbert series over the polynomial ring. 

Suppose that $M$ is a finitely generated graded module over the polynomial ring. By the Hilbert syzygy theorem, $M$ has a finite minimal free resolution. The finite minimal free resolution of $M$ is used to express the Hilbert series of $M$ as an alternating sum of the Hilbert series of free modules in each homological degree;  the rationality of the Hilbert series of free modules is a simple computation.  To the best of our knowledge, there is no known analog of the Hilbert syzygy theorem over the polytope semiring. Hence, we do not know if this proof can be adapted to prove the rationality of the Newton-Hilbert series.

Instead, we take cue from a different proof of the rationality of Hilbert series of Cohen-Macaulay modules that goes via induction on the depth of the module.  The base case is that of Artinian modules and is immediate.  Suppose that $r$ is a regular element of a graded module $M$, consider the short exact sequence \begin{center} $0 \rightarrow M \xrightarrow{r} M \rightarrow M/(r \cdot M) \rightarrow 0$ \end{center}

The additivity of Hilbert series in a short exact sequence and the fact that the depth of $M/(r \cdot M)$ is one less than the depth of $M$ then implies the rationality of the Hilbert series of $M$.

In order to adapt this proof for the polytope semiring, we define a notion of regular sequence on a sub-semimodule $\mathcal{M}$. More precisely,  we provide a condition for the sequence of coordinate points (Newton polytopes of variables $x_i$) to be regular.  

Let $(\mathcal{M}_k)_{\parallel e_i}$ be the sub-semigroup of  ${\mathcal{M}}_k$ containing $e_i$ as a Minkowski summand and $(\mathcal{M}_k)_{\perp e_i}$ be the sub-semigroup of $\mathcal{M}_k$ not containing $e_i$ as a Minkowski summand. Note that $\mathcal{M}_k$ is the disjoint union of  $(\mathcal{M}_k)_{\perp e_i}$ and $(\mathcal{M}_k)_{\parallel e_i}$.  In the following, we define the notion of a coordinate point to be a regular element on a sub-semimodule of $\mathcal{A}$.


\begin{definition}\label{tropreg_def}
A coordinate point $e_i$ is regular on a graded sub-semimodule $\mathcal{M}$ of $\mathcal{A}$ if and only if for all $k \in \mathbb{N}$ we have: \begin{center} $e_i \odot \mathcal{M}_k=(\mathcal{M}_{k+1})_{\parallel e_i}$. \end{center} 
\end{definition}

The following remarks clarify and motivate Definition \ref{tropreg_def}.

\begin{enumerate}  

\item The containment $e_i \odot \mathcal{M}_k \subseteq (\mathcal{M}_{k+1})_{\parallel e_i}$ always holds and its converse is the non-trivial condition imposed by Definition \ref{tropreg_def}.

\item Definition \ref{tropreg_def} is motivated by its analogue in the commutative ring setting: Let $I$ be a graded ideal of the polynomial ring $\mathbb{K}[x_1,\dots,x_n]$.  A homogenous element $h$ of degree one in the graded polynomial ring is a regular element (i.e., not a zero divisor) on $\mathbb{K}[x_1,\dots,x_n]/I$ if and only if $h \cdot I_k=(I_{k+1})_{\parallel h}$ for every  $k$ where $(I_{k+1})_{\parallel h}$ consists of all elements of degree $k+1$ in $I$ that divide $h$.

\item  Note the difference between the notion of regular sequence in the introduction and Definition \ref{tropreg_def}: in the introduction, we gave a condition for an arbitrary (finite) sequence of polytopes to be a regular sequence on the polytope semiring $\mathcal{A}$ whereas Definition \ref{tropreg_def} is a condition for a coordinate point to be regular on an arbitrary sub-semimodule of the polytope semiring.  According to the following proposition, the two definitions agree when they are both meaningful.
 \end{enumerate}

\begin{proposition} Let $(P_1,\dots,P_r)$ be a regular sequence of polytopes on the polytope semiring $\mathcal{A}$ such that $\langle P_1,\dots,P_r \rangle$ is a graded sub-semimodule of $\mathcal{A}$. The coordinate point $e_i$ is regular (in the sense of Definition \ref{tropreg_def}) on the sub-semimodule generated by $ P_1,\dots,P_r$ if and only if $(e_i,P_1,\dots,P_r)$ is a regular sequence on $\mathcal{A}$.  \end{proposition}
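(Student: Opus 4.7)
The plan is to use, as the key geometric fact, that $e_i$ is a single lattice point: $e_i \odot P$ is the translation of $P$ by $e_i$, and $e_i$ is a Minkowski summand of a polytope $R$ if and only if every vertex of $R$ has $i$-th coordinate at least $1$. Combined with the formulas $m_i(P \odot Q) = m_i(P) + m_i(Q)$ and $m_i(P \oplus Q) = \min(m_i(P), m_i(Q))$, where $m_i$ denotes the minimum of the $i$-th coordinate, this yields $e_i \mid P \odot Q$ if and only if $e_i \mid P$ or $e_i \mid Q$, and $e_i \mid P \oplus Q$ if and only if $e_i$ divides both $P$ and $Q$. I will reduce each side of the stated equivalence to the single combinatorial condition that $e_i$ is not a Minkowski summand of any $P_j$.

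For the direction ``$(e_i, P_1, \dots, P_r)$ regular implies $e_i$ regular on $\mathcal{M}$'', I first deduce $e_i \nmid P_j$ for every $j$ by plugging the lattice origin (which has degree $0$, hence lies outside every $C(e_i, P_1, \dots, P_{j-1})$ because all elements of that sub-semimodule have positive degree) into the step-$(j+1)$ regularity condition: this forces $P_j = \text{origin} \odot P_j \notin C(e_i, P_1, \dots, P_{j-1})$, while $e_i \mid P_j$ would place $P_j$ in $C(e_i) \subseteq C(e_i, P_1, \dots, P_{j-1})$, a contradiction. Then, given $Q \in (\mathcal{M}_{k+1})_{\parallel e_i}$, I expand $Q = \oplus_s A_s \odot P_{j_s}$ using $Q \in \mathcal{M}$. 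By Proposition \ref{vol-prop}, $A_s \odot P_{j_s} \subseteq Q$, so $e_i \mid A_s \odot P_{j_s}$, and the key fact combined with $e_i \nmid P_{j_s}$ forces $e_i \mid A_s$. Factoring $e_i$ out of every $A_s$ produces $Q = e_i \odot Q'$ with $Q' \in \mathcal{M}_k$, establishing Definition \ref{tropreg_def}.

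The converse direction is harder, and its crux is again showing $e_i \nmid P_j$ for every $j$. I will argue by contradiction: pick $j^*$ minimal in the lexicographic (degree, index) order among $\{j : e_i \mid P_j\}$, and write $P_{j^*} = e_i \odot P_{j^*}'$. Definition \ref{tropreg_def} forces $P_{j^*}' \in \mathcal{M}$, so $P_{j^*}' = \oplus_s C_s \odot P_{k_s}$; separating the terms with $k_s = j^*$ into $C'' \odot P_{j^*}$, the case $C'' \neq 0_{\mathcal{A}}$ is excluded because $e_i \odot C''$ is a non-origin polytope and $e_i \odot C'' \odot P_{j^*} \subseteq P_{j^*}$ would force the bounded polytope $P_{j^*}$ to contain its own Minkowski translate by the non-zero vector $e_i + c$ for any $c \in C''$, which is impossible. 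Hence $P_{j^*}'$ decomposes using only $P_k$ with $k \neq j^*$, and by the grading $\deg P_k < \deg P_{j^*}$, so $e_i \nmid P_k$ for each such $k$ by the minimality of $j^*$. Applying the regularity of $(P_1, \dots, P_r)$ at the maximal index $k'$ appearing in the decomposition and iterating cancellativity of Minkowski sum to extract $e_i$ from each layer, I expect to reach a forbidden identity of the form $\text{origin} = P_l \odot A$ with $P_l$ and $A$ of positive degree, yielding the contradiction. Once $e_i \nmid P_j$ is secured for all $j$, the step-$2$ condition of $(e_i, P_1, \dots, P_r)$ is precisely $e_i \nmid P_1$, and for step-$(j+1)$ with $j \geq 2$, every element of $C(e_i, P_1, \dots, P_{j-1})$ canonically decomposes as $\alpha \oplus \beta$ with $\alpha \in C(e_i) \cup \{0_{\mathcal{A}}\}$ and $\beta \in C(P_1, \dots, P_{j-1}) \cup \{0_{\mathcal{A}}\}$; case analysis using the regularity of $(P_1, \dots, P_r)$ when $\alpha = 0_{\mathcal{A}}$, the key fact when $\beta = 0_{\mathcal{A}}$, and the minimum-$x_i$ face of $Q \odot P_j$ in the mixed case lifts the decomposition back to one for $Q$. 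The chain of cancellations producing the forbidden identity is the main technical obstacle I anticipate.
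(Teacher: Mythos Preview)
Your route differs from the paper's, and the difference matters. The paper does not pass through the intermediate condition ``$e_i\nmid P_j$ for all $j$'' at all: for the implication ``$e_i$ regular on $\mathcal{M}=\langle P_1,\dots,P_r\rangle$ $\Rightarrow$ the extended sequence is regular'' it simply shows, in three lines using gradedness and Definition~\ref{tropreg_def}, that $Q\odot e_i\in\mathcal{M}$ forces $Q\in\mathcal{M}$. Combined with the standing hypothesis that $(P_1,\dots,P_r)$ is regular, this is exactly the regularity condition for the sequence with $e_i$ appended; the converse is the same observation read backwards. Your forward direction (``$(e_i,P_1,\dots,P_r)$ regular $\Rightarrow$ $e_i$ regular on $\mathcal{M}$'') via the origin trick and factoring $e_i$ out of each summand is correct and in fact more explicit than the paper's terse converse.

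Your converse direction, however, has a genuine gap precisely where you flag it. After writing $P_{j^*}=e_i\odot P_{j^*}'$ and $P_{j^*}'=\bigoplus_s C_s\odot P_{k_s}$ with $\deg P_{k_s}<\deg P_{j^*}$, you obtain $P_{j^*}\in C(\{P_{k_s}\})$, but the indices $k_s$ need not all lie below $j^*$: they can exceed $j^*$ as long as their degrees are smaller. The regularity of $(P_1,\dots,P_r)$ is one-directional in the \emph{index}, so you cannot directly invoke it to peel off the term with maximal $k_s$ when that maximum exceeds $j^*$; the ``chain of cancellations'' you hope for has no obvious inductive parameter that decreases. A second, smaller gap is the mixed case $\alpha,\beta\neq 0_{\mathcal{A}}$ in your final step: taking the minimum-$x_i$ face of $Q\odot P_j$ tells you that face sits inside $\beta$, but it does not by itself produce a decomposition $Q=\alpha'\oplus\beta'$ with $\alpha'\in C(e_i)$ and $\beta'\in C(P_1,\dots,P_{j-1})$. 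The paper's direct argument avoids both problems entirely, at the cost of effectively verifying regularity of $(P_1,\dots,P_r,e_i)$ rather than of $(e_i,P_1,\dots,P_r)$; you could adopt that shortcut and drop the intermediate reduction.
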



\begin{proof}
If  $e_i$ is regular on $\langle P_1,\dots,P_r \rangle$, then suppose that $Q \cdot e_i \in \langle P_1,\dots,P_r \rangle$. Since  $\langle P_1,\dots,P_r \rangle$  is a graded sub-semimodule of $\mathcal{A}$ we know that $Q \cdot e_i=\oplus_j Q'_j$ where each $Q'_j$ is contained in a graded piece of  $\langle P_1,\dots,P_r \rangle$. Furthermore,  each $Q'_j$ contains $e_i$ as a Minkowski summand. Hence, there exists a $Q_j \in \langle P_1,\dots,P_r \rangle$ such that  $Q'_j=e_i \cdot Q_j$. This implies that $Q \in \langle P_1,\dots,P_r \rangle$ showing that $(e_i,P_1,\dots,P_r)$ is a regular sequence.

Conversely, suppose that $(e_i,P_1,\dots,P_r)$ is regular, then if $e_i \cdot Q \in (\langle P_1,\dots,P_r \rangle)_{k+1}$ then $Q \in  (\langle P_1,\dots,P_r \rangle)_{k}$. Hence, $e_i \cdot (\langle P_1,\dots,P_r \rangle)_k=\langle P_1,\dots,P_r \rangle_{k+1}$ for every $k \in \mathbb{N}$.
\end{proof}


We define the notion of regularity for a sequence of coordinate points on an arbitrary sub-semimodule of the polytope semiring. 

\begin{definition}\label{tropregseq_def}
Let $\mathcal{M}$ be a sub-semimodule of the polytope semiring. A sequence $(e_{i_1},\dots,e_{i_r})$ of coordinate points is called a regular sequence on $\mathcal{M}$ if the following conditions are satisfied:
\begin{enumerate}
\item  The coordinate point $e_1$ is regular on $\mathcal{M}$ and $e_{i_{j+1}}$ is regular in $\mathcal{M}^{(j)}:=\mathcal{M} \cap e_{i_1}^{\perp} \cap \dots \cap e_{i_{j}}^{\perp}$ for $j=2,\dots,r-1$ where $e_{i_j}^{\perp}$ is the hyperplane of points with $i_j$-th coordinate zero. 
\item  Let $\mathcal{M}^{(0)}=\mathcal{M}$. For each $k \in \mathbb{N}$ and for $j$ from $0,\dots,r-1$, we have ${\rm rank}((({\mathcal{M}^{(j)}})_{\perp e_{i_{j+1}}})_k)={\rm rank}(({{\mathcal{M}}^{(j+1)}})_k)$.
\end{enumerate}
\end{definition}

\begin{remark}
\rm
 Definition \ref{tropregseq_def} is a condition for a sequence of coordinate points to be regular on an arbitrary graded sub-semimodule of the polytope semiring whereas the notion of regular sequence in the introduction is for an arbitrary sequence of polytopes to be regular on the polytope semiring.  Both these definitions can be applied to a sequence of distinct coordinate points on the polytope semiring. A sequence of distinct coordinate points is a regular sequence according to both Definition \ref{tropregseq_def} and the definition of regular sequences in the introduction. 
\end{remark}


\begin{remark}
\rm
The second condition in Definition \ref{tropregseq_def} is motivated by its counterpart in the case of the polynomial ring: Let $I$ be a graded ideal in the polynomial ring $\mathbb{K}[x_1,\dots,x_n]$.  Suppose that $(x_{i_1},\dots,x_{i_k})$ is a regular sequence of  $\mathbb{K}[x_1,\dots,x_n]/I$. Consider the vector space spanned by elements in $I_d$ obtained by plugging in $x_{i_1}=x_{i_2}=\dots=x_{i_{j-1}}=0_{\mathbb{K}}$. Let $V_j$ be its subspace spanned by elements that are not divisible by $x_{i_j}$.  For every $j$ from two to $k$ and for every $d \in \mathbb{N}$, the rank of $V_j$ is equal to the rank of $(I/ \langle x_{i_1},\dots,x_{i_{j}} \rangle I)_d$. 
\qed
\end{remark}

\begin{definition}\label{ArtCM_def}
A sub-semimodule $\mathcal{M}$ of $\mathcal{A}[n]$ is called \emph {Artinian} if there exists a sufficiently large $k_0 \in \mathbb{N}$ such that $\mathcal{M}_k=\mathcal{A}[n]_k$ for all $k \geq k_0$. Furthermore, it is called \emph{Cohen-Macaulay} if there exists a regular sequence $(e_{i_1},\dots,e_{i_r})$ of coordinate points such that $\mathcal{M}^{(r)}$ is Artinian over the copy $\mathcal{A}[n-r]$ of the polytope semiring generated by the $(n-r)$ coordinates in $[1,\dots,n] \setminus \{i_1,\dots,i_r\}$. The \emph{depth} of $\mathcal{M}$ is the smallest such integer $r$. 
\end{definition}

 In particular, if $\mathcal{M}$ has depth zero then it is Artinian.



\begin{theorem}{\bf (Rationality of Newton-Hilbert Series)}\label{newhilrat_theo}
Let  $\mathcal{M}$ be a Cohen-Maculay graded sub-semimodule of $\mathcal{A}[n]$, then its Newton-Hilbert series is a rational function. 
\end{theorem}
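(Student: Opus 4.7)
The plan is to imitate the classical commutative-algebra proof of rationality of the Hilbert series of a Cohen-Macaulay module, using induction on the depth $r$ of $\mathcal{M}$ and taking the coordinate points $e_{i_j}$ as substitutes for degree-one regular elements.

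For the base case $r=0$, $\mathcal{M}$ is Artinian, so $\mathcal{M}_k = \mathcal{A}[n]_k$ for all $k \geq k_0$. A polytope in $\mathcal{A}[n]_k$ with two or more vertices equals the $\oplus$-sum of its vertices, while single lattice points are $\oplus$-indecomposable, so the minimal generators of $\mathcal{A}[n]_k$ are precisely the lattice points in $H_k \cap \mathbb{Z}^n_{\geq 0}$. Thus $h^{\rm New}_{k,\mathcal{M}} = \binom{n+k-1}{n-1}$ for $k \geq k_0$, giving $H^{\rm New}_{\mathcal{M}}(t) = (1-t)^{-n} + Q(t)$ for a polynomial $Q$, which is rational.

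For the inductive step ($r \geq 1$), fix a regular sequence $(e_{i_1},\dots,e_{i_r})$ witnessing the depth, set $e := e_{i_1}$ and $\mathcal{M}^{(1)} := \mathcal{M}\cap e^{\perp}$. The tail $(e_{i_2},\dots,e_{i_r})$ is a regular sequence of length $r-1$ on $\mathcal{M}^{(1)}$ over $\mathcal{A}[n-1]$, so by the inductive hypothesis $H^{\rm New}_{\mathcal{M}^{(1)}}(t)$ is rational. The goal is the recursion
$$
h^{\rm New}_{k,\mathcal{M}} \;=\; h^{\rm New}_{k-1,\mathcal{M}} + h^{\rm New}_{k,\mathcal{M}^{(1)}}\qquad (k\geq 1),
$$
which translates to $(1-t)\,H^{\rm New}_{\mathcal{M}}(t) = H^{\rm New}_{\mathcal{M}^{(1)}}(t) + R(t)$ for a polynomial correction $R$ and hence yields rationality. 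To prove the recursion I split $\mathcal{M}_k = (\mathcal{M}_k)_{\parallel e}\sqcup(\mathcal{M}_k)_{\perp e}$. Condition~(1) of Definition~\ref{tropregseq_def} implies that the map $P\mapsto e\odot P$ is a $\oplus$-semigroup isomorphism $\mathcal{M}_{k-1}\to(\mathcal{M}_k)_{\parallel e}$ (injective by Minkowski cancellativity, surjective by regularity, intertwining $\oplus$ since translation commutes with convex hull), hence $\text{rank}((\mathcal{M}_k)_{\parallel e}) = h^{\rm New}_{k-1,\mathcal{M}}$; and Condition~(2) is exactly $\text{rank}((\mathcal{M}_k)_{\perp e}) = h^{\rm New}_{k,\mathcal{M}^{(1)}}$.

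The delicate remaining ingredient, and the main obstacle, is the additivity $h^{\rm New}_{k,\mathcal{M}} = \text{rank}((\mathcal{M}_k)_{\parallel e}) + \text{rank}((\mathcal{M}_k)_{\perp e})$. Its easy half, that a minimal generator of $(\mathcal{M}_k)_{\parallel e}$ stays minimal in $\mathcal{M}_k$, follows because any decomposition $P = P_1\oplus P_2$ of a polytope $P$ with vertices in $\{x_{i_1}\geq 1\}$ confines both $P_1,P_2$ to $(\mathcal{M}_k)_{\parallel e}$. The hard half is to exclude a mixed-type decomposition $G = G_1 \oplus G_2$ of a minimal generator $G$ of $(\mathcal{M}_k)_{\perp e}$, with $G_1\in(\mathcal{M}_k)_{\parallel e}$ and $G_2\in(\mathcal{M}_k)_{\perp e}$. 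I would use Condition~(1) to write $G_1 = e\odot G_1'$ with $G_1'\in\mathcal{M}_{k-1}$, exploit the geometric identity $G\cap\{x_{i_1}=0\} = G_2\cap\{x_{i_1}=0\}$ (which holds because $G_1\subseteq\{x_{i_1}\geq 1\}$), and run a measure argument in the spirit of the proof of Proposition~\ref{unigen_thm} (using Proposition~\ref{vol-prop}, the Brunn--Minkowski inequality, and Minkowski cancellativity) to force $G$ to be already decomposable inside $(\mathcal{M}_k)_{\perp e}$, contradicting its minimality. Getting this bookkeeping right, by combining both conditions of Definition~\ref{tropregseq_def}, is the main technical work.
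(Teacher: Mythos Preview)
Your argument follows the paper's exactly: induction on depth with the Artinian base case, the splitting $\mathcal{M}_k = (\mathcal{M}_k)_{\parallel e}\sqcup(\mathcal{M}_k)_{\perp e}$, and the two conditions of Definition~\ref{tropregseq_def} to obtain the recursion $h^{\rm New}_k(\mathcal{M}) = h^{\rm New}_{k-1}(\mathcal{M}) + h^{\rm New}_k(\mathcal{M}^{(1)})$ and hence $H^{\rm New}_{\mathcal{M}}(t) = H^{\rm New}_{\mathcal{M}^{(1)}}(t)/(1-t)$. The one place you diverge is in emphasis: the paper records the rank additivity $h^{\rm New}_k(\mathcal{M}) = {\rm rank}((\mathcal{M}_k)_{\parallel e}) + {\rm rank}((\mathcal{M}_k)_{\perp e})$ as an immediate consequence of the disjoint decomposition, without any mixed-decomposition or measure analysis, so the extra caution you invest at that step goes beyond (rather than falls short of) the paper's own treatment.
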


\begin{proof}

We apply induction on the depth of $\mathcal{M}$.  If $\mathcal{M}$ is Cohen-Macaulay of depth zero then it is Artinian. Hence, its  Newton-Hilbert series is rational. 
  Assume that the  Newton-Hilbert series of every Cohen-Macaulay graded sub-semimodule of depth at most $r$ is rational. Consider a Cohen-Macaulay graded sub-semimodule of depth $(r+1)$. Suppose that $(e_1,\dots,e_{r+1})$ is a regular sequence on $\mathcal{M}$.

Consider ${\mathcal{M}}_k$, the $k$-th graded piece of $\mathcal{M}$. Decompose $\mathcal{M}_k$ into $(\mathcal{M}_k)_{\perp e_1}$ and $(\mathcal{M}_k)_{\parallel e_1}$.

 Hence, $h^{\rm New}_k(\mathcal{M})={\rm rank}((\mathcal{M}_k)_{\perp e_1})+ {\rm rank}((\mathcal{M}_k)_{\parallel e_1})$.  Since $e_1$ is a regular element on $\mathcal{M}$, we have the following two properties:
 \begin{enumerate}
 \item $e_1 \odot \mathcal{M}_{k-1}=(\mathcal{M}_{k})_{\parallel e_1}$.
 \item ${\rm rank}((\mathcal{M}_k)_{\perp e_1})={\rm rank}((\mathcal{M}^{(1)})_k)$.
 \end{enumerate}

We obtain 

\begin{center}  $h_k^{\rm New}(\mathcal{M})=h_{k-1}^{\rm New}(\mathcal{M})+h_{k}^{\rm New}(\mathcal{M}^{(1)})$\end{center}

 This induces the following recurrence on the Newton-Hilbert series: \begin{center} $H^{\rm New}_{\mathcal{M}}(t)=t \cdot H^{\rm New}_{\mathcal{M}}(t)+ H^{\rm New}_{\mathcal{M}^{(1)}}(t)$ \end{center}
 Hence,  $H^{\rm New}_{\mathcal{M}}(t)=H^{\rm New}_{\mathcal{M}^{(1)}}(t)/(1-t)$.
  Note that $\mathcal{M}^{(1)}$ is a sub-semimodule of the copy of the polytope semiring $\mathcal{A}[n-1]$ generated by the last $(n-1)$ coordinates and has depth precisely $r$. By the induction hypothesis,  it  has a rational Newton-Hilbert series.  This implies that $H^{\rm New}_{\mathcal{M}}(t)$ is also rational. 
\end{proof}

Without the assumption of Cohen-Macaulayness, we are not able to show the rationality of the Newton-Hilbert series. The main difficulty is that the induction parameter depth is not apparent in the general case.  In a recent work, Sam and Snowden \cite{SamSno14} develop the concept of combinatorial categories and prove rationality results of the Hilbert series for representation of such categories. We are not aware of a concrete connection between polytope semirings and combinatorial categories. This may be a useful tool to proving rationality results of Newton-Hilbert series.

Remark also that regular elements on $\mathbb{K}[x_1,\dots,x_n]/I$ do not necessarily specialize to regular elements on ${\rm New}(I)$. For instance, consider the toric ideal $I_{A_n}=\langle x_1-x_2,\dots,x_n-x_{n+1} \rangle$ of $\mathbb{K}[x_1,\dots,x_{n+1}]$. The variable $x_1$ is a regular element (since it is not a zero divisor)  on $\mathbb{K}[x_1,\dots,x_{n+1}]/I_{A_n}$.  But $e_1={\rm New}(x_1)$ is not a regular element of ${\rm New}(I_{A_n})$.  For instance, for $k=1$ the second condition in the definition of regular sequence is violated:  ${\rm rank}({\rm New}((I_{A_n})_1)_{\perp e_1})=\binom{n+1}{2}$ whereas  ${\rm rank}((({\rm New}(I_{A_n}))^{(1)})_1)=n$.


\subsection{Examples}

\begin{itemize}
\item The  Newton-Hilbert series for the polytope semiring $\mathcal{A}[n]$ agrees with the graded polynomial ring in $n$-variables. It is given by the rational function $1/(1-t)^n$.

\item For any monomial ideal $M$ of $\mathbb{K}[x_1,\dots,x_n]$, the Newton-Hilbert series of ${\rm New}(M)$ coincides with its Hilbert series $M$.

\item  In the following, we present an example of a family of Cohen-Macaulay sub-semimodules of the polytope semiring $\mathcal{A}[3]$, each of depth one. Pick an natural number $d 
\geq 1$. Let $S_{\parallel e_1,d}$ be the set of all points in $H_d=\{ (x_1,x_2,x_3)|~x_1+x_2+x_3=d\}  \cap \mathbb{Z}^3_{\geq 0}$ whose first coordinate is at least one and $S_{\perp e_1,d}$ be its complement in $H_d \cap \mathbb{Z}^3_{\geq 0}$. For each point $q$ in $S_{\perp e_1,d}$, consider the polytope $P_q$ given by the convex hull of $\{q\} \cup S_{\parallel e_1,d}$.  

\indent
Let $\mathcal{M}$ be the sub-semimodule generated by $P_q$ over all $q \in S_{\perp e_1,d}$. By construction, $\mathcal{M}^{(1)}$ is Artinian as a sub-semimodule of the copy of $\mathcal{A}[2]$ corresponding to the semiring generated by the last two coordinates. Furthermore, $\mathcal{M}$ is Cohen-Macaulay of depth one. The same computation as in the proof of Theorem \ref{newhilrat_theo} shows that its Newton-Hilbert series is $t^d/(1-t)^3$. Using a similar approach, we can construct Cohen-Macaulay sub-semimodules of arbitrary depth.

\end{itemize}


\section{Regular Sequence of Polytopes}\label{regseq_sect}

We construct examples of regular (and non-regular) sequences of polytopes. We start with the case of two polytopes.

\begin{proposition} If two polytopes share a non-trivial summand i.e., a Minkowski summand that is not a point then they do not form a regular sequence \end{proposition}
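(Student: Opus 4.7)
The plan is to directly exhibit a polytope $Q \notin C(P_1)$ for which $Q \odot P_2 \in C(P_1)$, thereby violating the regular-sequence condition applied to $(P_1,P_2)$. Write the shared non-trivial Minkowski summand as $S$ and decompose $P_1 = S \odot A$ and $P_2 = S \odot B$ with $\dim(S) \geq 1$. The natural choice of witness is $Q := A$, since then
\[
Q \odot P_2 \;=\; A \odot S \odot B \;=\; P_1 \odot B,
\]
and the right-hand side visibly lies in $C(P_1)$. The entire content of the proof is therefore to verify that this $A$ itself does \emph{not} belong to $C(P_1)$.

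To do so, I would first observe that $C(P_1)$, being the sub-semimodule generated by the single element $P_1$, consists precisely of the polytopes of the form $R \odot P_1$ with $R \in \mathcal{A}$: closure under $\oplus$ follows from distributivity, $(R_1 \odot P_1) \oplus (R_2 \odot P_1) = (R_1 \oplus R_2) \odot P_1$, and closure under $\odot$ is immediate from associativity. So $A \in C(P_1)$ would force an equation $A = R \odot S \odot A$ for some $R \in \mathcal{A}$.

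Now I would invoke the cancellativity of the Minkowski semigroup recorded in Section \ref{polysemibas_sect}: cancelling $A$ from both sides of $\{0\} \odot A = (R \odot S) \odot A$ yields $R \odot S = \{0\}$, the multiplicative identity of $\mathcal{A}$. But Minkowski sum is monotone in dimension, so $\dim(R \odot S) \geq \dim(S) \geq 1$ whenever $R \neq 0_{\mathcal{A}}$, while the case $R = 0_{\mathcal{A}}$ would produce $R \odot P_1 = 0_{\mathcal{A}} \neq A$. Either way we reach a contradiction, and $A \notin C(P_1)$.

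I do not anticipate a real obstacle here: the only thing to be careful about is the edge case where $A$ itself is a point (i.e.\ $P_1 = S$ up to translation) or where one should worry about $0_{\mathcal{A}}$, both of which the dimension bound handles uniformly. Combining the two observations — $Q = A \notin C(P_1)$ while $Q \odot P_2 \in C(P_1)$ — exhibits the required failure of the condition of Section \ref{regseq_sect}, so $(P_1,P_2)$ is not a regular sequence.
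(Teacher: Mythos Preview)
Your proof is correct and supplies exactly the natural argument; the paper itself states this proposition without proof, moving directly to the counterexample for the converse. Your approach---writing $P_1 = S \odot A$, $P_2 = S \odot B$, taking $Q = A$ as the witness, and using cancellativity of the Minkowski semigroup together with the dimension bound $\dim(R \odot S) \geq \dim(S) \geq 1$ to rule out $A \in C(P_1)$---is precisely what one expects. The description $C(P_1) = \{R \odot P_1 : R \in \mathcal{A}\}$ via distributivity of $\odot$ over $\oplus$ is correct and worth stating, as you do. One small cosmetic point: since the proposition is symmetric in $P_1, P_2$ and regular sequences are a priori ordered, you might remark that the identical argument with $Q = B$ handles the order $(P_2, P_1)$, though strictly speaking the proposition as phrased only asserts failure for the given pair.
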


The converse, however is not true. For instance in $\mathbb{R}^2$, let $P_1$ be the rectangle with the four vertices $(0,0),~(1,0),~(0,1),~(1,1)$  and $P_2$ be the triangle with vertices $(0,0)~(1,0),~(1,1)$.  The triangle $P_2$ is irreducible and in particular, the polygons $P_1$ and $P_2$ do not share a non-trivial Minkowski summand. However, the sequence $(P_1,P_2)$ is not regular. To see this, let $P_3$ be the line segment with end points $(0,0)$ and $(1,1)$, and let $P_4$ be the triangle with vertices $(0,0),~(0,1)$ and $(1,1)$. We have $P_1 \odot P_3=P_2 \odot P_4$ and this is the hexagon with vertices $(0,0),~(1,0),~(2,1),~(2,2),~(1,2),~(0,1)$. Furthermore, $P_3$ does not contain $P_2$ as a Minkowski summand
\footnote{The author thanks Klaus Altmann for sharing this example}.

In Section \ref{kosprop_sect}, we give a characterization of regular sequences of polytopes in terms of their syzygies. In the following, we provide an explicit construction of a non-regular and a regular sequence of arbitrary number of polytopes.




\subsection{A Non-regular Sequence of Polytopes.} 

Let $P$ be a full-dimensional polytope in $\mathcal{A}[n]$ for $n \geq 2$ and $L$ be a line segment in $\mathcal{A}[n]$. Consider $P \odot L$ and decompose it into $l$-full dimensional simplices $Q_1,\dots,Q_l$, for some $l$.

\begin{proposition} The sequence $(P,Q_1,\dots,Q_l)$ is not a regular sequence. \end{proposition}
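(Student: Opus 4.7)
The plan is to witness the failure of regularity via the contrapositive of the weak Koszul property (Theorem~\ref{kosgenintro_thm}), by exhibiting an explicit type-I polytope syzygy of $(P, Q_1, \ldots, Q_l)$ that cannot be equivalent to any Koszul-generated type-I syzygy with a containing index set. The essential input is the triangulation identity $P \odot L = Q_1 \oplus Q_2 \oplus \cdots \oplus Q_l$, which forces $P \odot L$ to admit two incompatible representations inside $C(P, Q_1, \ldots, Q_l)$: as $L \odot P$, and as the $\oplus$-sum of the triangulating simplices.

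Concretely I would take the $(l+1)$-tuple
\[
Y \;=\; (L,\, \mathbf{e},\, \mathbf{e},\, \ldots,\, \mathbf{e}),
\]
where $\mathbf{e}$ denotes the one-point lattice polytope at the origin (the Minkowski identity of $\mathcal{A}$, to be distinguished from the absorbing additive identity $0_{\mathcal{A}}$). First I would verify that $Y$ is a polytope syzygy: its associated polytope is $W = (P \odot L) \oplus Q_1 \oplus \cdots \oplus Q_l = P \odot L$, since each $Q_j \subseteq P \odot L$, and every vertex of $W$ lies in some simplex $Q_j$ of the triangulation, hence is shared by the terms $P \odot L$ and $Q_j$. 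Next, $Y$ is of type I with index set $\{1\}$: one has $W = P \odot Y_1$ by construction, while each $Q_{j-1} \odot \mathbf{e} = Q_{j-1}$ is a proper subset of $W$ for $j \geq 2$ (a single simplex of a nontrivial triangulation cannot recover the full prism; the case $l=1$ does not arise, as a prism over a full-dimensional $P$ along a segment in $n \geq 2$ is never itself a simplex). To apply the contrapositive, I would then rule out any equivalent type-I syzygy $Y'$ in ${\rm Kos}(P, Q_1, \ldots, Q_l)$ whose index set contains $1$: equivalence fixes the same $W = L \odot P$ and the same empty zero-coordinate set, and the index-set condition forces $P \odot Y'_1 = L \odot P$, hence $Y'_1 = L$ by the cancellativity of the Minkowski semigroup. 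But the first coordinate of any element of ${\rm Kos}$ is of the form $\bigoplus_{k=1}^{l} R_k \odot Q_k$, which is either $0_{\mathcal{A}}$ or of dimension at least $n \geq 2$ (each $Q_k$ being full-dimensional), and can therefore never equal the one-dimensional segment $L$.

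The main obstacle is the careful bookkeeping around the definitions of polytope syzygy, type, equivalence, and index set, keeping straight the distinct roles of $\mathbf{e}$ and $0_{\mathcal{A}}$, and checking that the vertex-sharing condition genuinely holds for every vertex of $P \odot L$. A secondary worry is possible circularity in the forward reference to the weak Koszul property (proved in Section~\ref{kosprop_sect}); should that become an issue, the same non-regularity can be produced directly by taking $i = 2$ and the test polytope $Y = L \odot Q_2 \odot \cdots \odot Q_l$, and verifying through a normal-fan and edge-length comparison that $P$ is a Minkowski summand of $Q_1 \odot Q_2 \odot \cdots \odot Q_l$ (placing $Y \odot Q_1$ in $C(P)$) while the removal of $Q_1$ deletes an edge direction of $P$ supported only on the portion of $\partial(P \odot L)$ covered by $Q_1$ (so $Y \notin C(P)$).
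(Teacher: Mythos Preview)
Your primary argument via the weak Koszul property is correct, but it is a substantial detour. The paper's proof is two sentences and works straight from the definition of regular sequence: since $L$ is one-dimensional while every nonzero element of $C(Q_1,\dots,Q_l)$ has dimension at least $n\ge 2$, one has $L\notin C(Q_1,\dots,Q_l)$; on the other hand, the triangulation gives $L\odot P = P\odot L = Q_1\oplus\cdots\oplus Q_l \in C(Q_1,\dots,Q_l)$. That is already the required witness to non-regularity. Your route reproves this same fact through the machinery of type-I syzygies, equivalence classes, index sets, and cancellativity, and then appeals to Theorem~\ref{kosgenintro_thm}, whose proof is considerably longer than the two-line argument it is replacing. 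The only thing your approach buys is that it literally checks the ordering $(P,Q_1,\dots,Q_l)$ as written, whereas the paper's witness $L$ naturally lives at the position of $P$ viewed as the last element $(Q_1,\dots,Q_l,P)$; but since order-independence is established as a corollary of the same Theorem~\ref{kosgenintro_thm} you invoke, neither argument avoids a forward reference, and the direct one is far more economical.

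Your backup plan (take $i=2$ and $Y=L\odot Q_2\odot\cdots\odot Q_l$, then argue that $P$ is a Minkowski summand of $Q_1\odot\cdots\odot Q_l$ via normal fans) is not convincing as stated. There is no reason in general for $P$ to divide the Minkowski product of the simplices in a triangulation of $P\odot L$; the triangulation is a $\oplus$-decomposition, not a $\odot$-decomposition, and the edge-direction argument you sketch does not establish Minkowski summandhood. If you want a direct proof that avoids all forward references, the clean fix is simply to reorder the sequence to $(Q_1,\dots,Q_l,P)$ and use the paper's two-line argument verbatim.
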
 
\begin{proof} Note that $L$ has dimension one and is hence, not contained in $C(Q_1,\dots,Q_l)$ where $C(Q_1,\dots,Q_l)$ is the sub-semimodule generated by $Q_1,\dots,Q_l$. By construction,  $P \odot L$ is contained in $C(Q_1,\dots,Q_l)$. \end{proof}

\begin{figure}
  \centering
  \includegraphics[width=8cm]{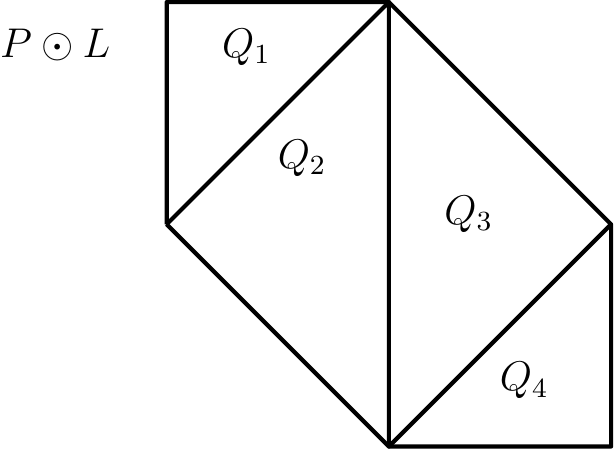}
  \caption{A non-regular sequence of four polytopes}\label{non-regular}
\end{figure}

Figure \ref{non-regular} shows a non-regular sequence of five polytopes.

\subsection{A Regular Sequence of Polytopes}

A basic example of a regular sequence on the polynomial ring is the sequence of variables.  In the following, we note that their Newton polytopes form a regular sequence on the polytope semiring. More precisely, we show the following:

\begin{proposition}
Let $e_1,\dots,e_n$ be the $n$ coordinate points of $\mathbb{R}^n$ i.e., the Newton polytopes of the variables $x_1,\dots,x_n$, the sequence $(e_1,\dots,e_n)$ is a regular sequence of polytopes on the polytope semiring $\mathcal{A}$.
\end{proposition}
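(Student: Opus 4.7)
The plan is to verify the regularity condition of the introduction in its contrapositive form: for each $i \in \{2,\dots,n\}$, whenever $Q \odot e_i \in C(e_1,\dots,e_{i-1})$ one has $Q \in C(e_1,\dots,e_{i-1})$. The key ingredient is an explicit vertex-level description of the sub-semimodule $C(e_1,\dots,e_{i-1})$.

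First I would prove the following characterization: a polytope $Q \in \mathcal{A} \setminus \{0_{\mathcal{A}}\}$ lies in $C(e_1,\dots,e_{i-1})$ if and only if every vertex $v$ of $Q$ satisfies $v_j \geq 1$ for some $j \in \{1,\dots,i-1\}$. The forward direction uses only that multiplication by a point is translation: if $Q = \bigoplus_{j=1}^{i-1}(A_j \odot e_j)$, then each nonzero summand $A_j \odot e_j$ is the translate $A_j + e_j$ of a polytope in $\mathbb{Z}^n_{\geq 0}$, so its vertices have $j$-th coordinate at least one, and vertices of $Q$ are vertices of these summands. For the reverse direction, given such a $Q$, let $V_j$ be the set of vertices $v$ of $Q$ with $v_j \geq 1$ (for $j = 1,\dots,i-1$); by hypothesis $\bigcup_j V_j$ is the entire vertex set of $Q$. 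Define $A_j := \conv\{v - e_j : v \in V_j\}$ when $V_j \neq \emptyset$ and $A_j := 0_{\mathcal{A}}$ otherwise. Each $A_j$ lies in $\mathbb{Z}^n_{\geq 0}$ because $v_j - 1 \geq 0$ for $v \in V_j$ and the other coordinates are unchanged. A routine argument then gives $\bigoplus_j (A_j \odot e_j) = Q$: the left side is the convex hull of $\bigcup_j (A_j + e_j)$, which is contained in $Q$ (each $A_j + e_j = \conv(V_j) \subseteq Q$) and contains every vertex of $Q$ (since $\bigcup_j V_j$ does).

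With the characterization in hand, regularity becomes immediate. Assume $Q \odot e_i \in C(e_1,\dots,e_{i-1})$. Since $e_i$ is a point, $Q \odot e_i = Q + e_i$ is a translate of $Q$ along the $i$-th coordinate axis only, so its vertices agree with the vertices of $Q$ on coordinates $1,\dots,i-1$. Applying the characterization to $Q + e_i$ and then to $Q$ shows $Q \in C(e_1,\dots,e_{i-1})$, which is the required contrapositive at stage $i$. Running this for each $i$ from $2$ to $n$ establishes that $(e_1,\dots,e_n)$ is a regular sequence.

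The main (indeed only) obstacle is the vertex characterization; the rest is forced because the Newton polytope of $x_i$ is a single point whose Minkowski action is translation along the $i$-th axis, and this axis is irrelevant to the vertex condition defining $C(e_1,\dots,e_{i-1})$. Degenerate cases ($Q = 0_{\mathcal{A}}$ or empty $V_j$) are absorbed by the convention $A_j \odot 0_{\mathcal{A}} = 0_{\mathcal{A}}$ and cause no trouble.
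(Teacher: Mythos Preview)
Your proof is correct. The paper's argument is more direct but rests on the same translation idea: given $Q \odot e_i = \bigoplus_{j=1}^{i-1}(Q_j \odot e_j)$, the paper observes that each summand $Q_j + e_j$ is contained in $Q + e_i$, so every point of $Q_j$ has $i$-th coordinate at least one; hence $Q_j - e_i \in \mathcal{A}$ and $Q = \bigoplus_{j=1}^{i-1}\big((Q_j - e_i)\odot e_j\big)$, placing $Q$ in $C(e_1,\dots,e_{i-1})$.

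The difference is one of packaging. You first isolate a standalone vertex characterization of $C(e_1,\dots,e_{i-1})$ (membership is equivalent to every vertex having a positive entry among the first $i-1$ coordinates) and then read off the result, while the paper translates a \emph{given} decomposition by $-e_i$ in one step without stating the characterization explicitly. Your route costs a little more setup but yields a reusable description of the sub-semimodule; the paper's route is shorter but slightly less informative. Both hinge on the same elementary fact that Minkowski multiplication by a coordinate point is translation along a single axis.
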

 
 \begin{proof}
 Suppose that $Q_i \odot e_i \in C(e_1,\dots,e_{i-1})$ for some $i \geq 2$. We have $Q_i \odot e_i=\oplus_{j=1}^{i-1} (Q_j \odot e_j)$ and $Q_k \odot e_k=Q_k+e_k$ for $k$ from $1$ to $i$. 
 Furthermore, since every point of $Q_i \odot e_i$ has $i$-th coordinate at least one we note that for every $j$ from $1$ to $i-1$, every point of $Q_j$ also has $i$-th co-ordinate at least one. 
 Hence, $Q_j -e_i$ is an element in the polytope semiring $\mathcal{A}$. Finally, we note that $Q_i=\oplus_{j=1}^{i-1}(Q_j-e_i)\odot e_j$. Hence, $Q_i \in C(e_1,\dots,e_{i-1})$ and this concludes the proof of the proposition.
   
 \end{proof}
 


\section{A Koszul Property for Polytopes}\label{kosprop_sect}

In this section, we show the weak Koszul property for polytopes (Theorem \ref{kosgenintro_thm} in the introduction).  The Koszul property for commutative rings can be shown via the exactness of the Koszul complex \cite[Chapter 4]{Ser00}. In the case of polytope semirings, we do not have the notion of Koszul complex and its homology.  On the other hand, we employ the monotonicity of the Lebseque measure under addition in the polytope semiring and its consequences such as the existence of canonical solutions to linear equations discussed in Section \ref{polysemibas_sect}. We believe that these properties can be systematically used as a substitute for the lack additive inverse to establish further analogies between commutative rings and polytope semirings. We first warm up with the case of two polytopes before treating the general case.  We start by proving the following assertion made in the introduction.

\begin{proposition}\label{syzsemimod_prop}The set of all syzygies of $(P_1,\dots,P_r)$ form a semimodule of $\mathcal{A}$.\end{proposition}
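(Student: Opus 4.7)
The plan is to verify that ${\rm Syz}^1(P_1,\dots,P_r)$ is closed under the two defining sub-semimodule operations: coordinate-wise $\oplus$ and the action $R \odot (-)$ by any $R \in \mathcal{A}$. The single technical ingredient I would use throughout is the distributivity identity
\[
R \odot (A \oplus B) \;=\; (R \odot A) \oplus (R \odot B),
\]
i.e.\ $R + \operatorname{conv}(A \cup B) = \operatorname{conv}((R+A) \cup (R+B))$, which follows directly from $r + \sum_i \lambda_i a_i = \sum_i \lambda_i (r + a_i)$ together with the convexity of $R$. Iterated, this identity tells us that the associated polytope of the coordinate-wise sum $(Q_j \oplus Q_j')_j$ of two syzygies is $W \oplus W'$, and that the associated polytope of $(R \odot Q_j)_j$ is $R \odot W$, where $W, W'$ denote the associated polytopes of the input syzygies. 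The problem therefore reduces to checking the vertex-sharing condition on these two new associated polytopes.

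For closure under $\oplus$, every vertex $v$ of $W \oplus W' = \operatorname{conv}(W \cup W')$ is an extreme point of this convex hull, and hence a vertex of $W$ or of $W'$. If $v$ is a vertex of $W$, the syzygy hypothesis for $(Q_j)_j$ supplies indices $j_1, j_2$ with $v \in P_{j_k} \odot Q_{j_k}$ for $k=1,2$; the inclusion $P_{j_k} \odot Q_{j_k} \subseteq P_{j_k} \odot (Q_{j_k} \oplus Q_{j_k}')$ then shows that $v$ is shared by at least the $j_1$- and $j_2$-th summands of $W \oplus W'$, as required. The case of $v$ a vertex of $W'$ is symmetric.

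For closure under $R \odot (-)$, each vertex $u$ of $R \odot W$ admits a unique decomposition $u = v + w$ with $v$ a vertex of $R$ and $w$ a vertex of $W$; this is the standard Minkowski-sum vertex decomposition, obtained by picking any linear functional in the intersection of the interiors of the normal cones at $u$, $v$, and $w$. The syzygy hypothesis for $(Q_j)_j$ then yields $j_1, j_2$ with $w \in P_{j_k} \odot Q_{j_k}$, hence $u = v + w \in R \odot P_{j_k} \odot Q_{j_k}$ for $k=1,2$, which is precisely the vertex-sharing condition for $(R \odot Q_j)_j$.

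I do not anticipate a serious obstacle; the proof is essentially clean bookkeeping of how the associated polytope transforms under the semimodule operations, made routine by the distributivity identity together with two elementary polytope-theoretic facts used above (extreme points of $\operatorname{conv}(W \cup W')$ are vertices of $W$ or of $W'$, and vertices of a Minkowski sum decompose uniquely into vertex summands). In particular, the argument sidesteps the measure-theoretic machinery developed elsewhere in the paper.
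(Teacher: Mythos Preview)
Your proposal is correct and follows essentially the same route as the paper's proof: for closure under $\oplus$ you use that every vertex of $W \oplus W'$ is a vertex of $W$ or of $W'$, and for closure under $R \odot(-)$ you use that every vertex of $R \odot W$ decomposes as a sum of a vertex of $R$ and a vertex of $W$. The paper's argument is organized identically; the only cosmetic differences are that the paper phrases the $\oplus$-step as ``$v$ is a vertex of $P_i \odot (Q_i \oplus R_i)$'' rather than your ``$v \in P_i \odot (Q_i \oplus R_i)$'' (equivalent here, since $v$ is extreme in the ambient polytope), and the paper separately records the trivial case $R = 0_{\mathcal{A}}$.
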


\begin{proof} Suppose that $(Q_1,\dots,Q_r)$ and $(R_1,\dots,R_r)$ are syzygies of $(P_1,\dots,P_r)$. Let $W_1=\oplus_i(P_i \odot Q_i)$ and $W_2=\oplus_i(P_i \odot R_i)$. The set of vertices of $W_1 \oplus W_2$ are a subset of the union of the set of vertices of $W_1$ and $W_2$. Hence any vertex $v$ of $W_1 \oplus W_2$ is shared by (at least) two vertices in either $\{P_i \odot Q_i\}_i$ or $\{P_i \odot R_i\}_i$. Suppose that $v$ is shared by $P_1 \odot Q_1$ and $P_2 \odot Q_2$.  Since  $v$ is a vertex of $W_1 \oplus W_2$,  it is a vertex of both $P_1 \odot Q_1 \oplus P_1 \odot R_1$ and  $P_2 \odot Q_2 \oplus P_2 \odot R_2$. Hence, $(Q_1 \oplus R_1,\dots,Q_r \oplus R_r)$ is a syzygy of $(P_1,\dots,P_r)$.

Suppose that $R$ is an element in $\mathcal{A}$.  Suppose that $R=0_{\mathcal{A}}$ then $R \odot (Q_1,\dots,Q_r)=(0_{\mathcal{A}},\dots,0_{\mathcal{A}})$ is a syzygy. Suppose that $R \neq 0_{\mathcal{A}}$,
consider $R \odot W_1$. Any vertex $v$ of $R \odot W_1$ is the sum of a vertex $w$ of $R$ and a vertex $u$ of $W_1$. Since $(Q_1,\dots,Q_r)$ is a syzygy, the vertex $u$ is shared by at least 
two elements in $\{P_i \odot Q_i\}_i$, suppose that they are $P_1\odot Q_1$ and $P_2 \odot Q_2$.  This implies that $v$ is a vertex of both $R \odot P_1 \odot Q_1$ and $R \odot P_2 \odot Q_2$. Hence, $R \odot (Q_1,\dots,Q_r)$ is a syzygy of 
$(P_1,\dots,P_r)$. This concludes the proof of the Proposition.
 \end{proof}

\subsection{Two Polytopes}

\begin{theorem}\label{koztwo_thm}
For two polytopes $P_1,~P_2 \in \mathcal{A}$.  The first syzygy semimodule ${\rm Syz}^{\rm 1}(P_1,P_2)$ is equal to ${\rm Kos}(P_1,P_2)$, the semimodule generated by the Koszul syzygy if and only if $(P_1,P_2)$ is a regular sequence of polytopes on $\mathcal{A}$.  
\end{theorem}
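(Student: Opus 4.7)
The plan is to first reduce the definition of a syzygy of two polytopes to the cleaner statement that a pair $(Q_1,Q_2)$ is a syzygy of $(P_1,P_2)$ if and only if $P_1 \odot Q_1 = P_2 \odot Q_2$. Indeed, for $r=2$ the condition ``every vertex of $W=P_1\odot Q_1 \oplus P_2 \odot Q_2$ is shared by at least two of the summands'' forces every vertex of $W$ to lie in both $P_1\odot Q_1$ and $P_2\odot Q_2$; since each summand is contained in $W$, this gives $W = \conv(\text{vertices of }W) \subseteq P_1\odot Q_1$ and similarly $\subseteq P_2\odot Q_2$, and hence $P_1\odot Q_1 = P_2\odot Q_2 = W$. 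I would also record upfront that ${\rm Kos}(P_1,P_2)$ is precisely the set $\{R\odot (P_2,P_1) : R\in \mathcal{A}\}$, since the sub-semimodule generated by a single element collapses under $\oplus$ to its scalar multiples: $R_1\odot (P_2,P_1) \oplus R_2\odot (P_2,P_1) = (R_1\oplus R_2)\odot(P_2,P_1)$.

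For the ``regular implies Koszul'' direction, I would take an arbitrary syzygy $(Q_1,Q_2)$ and use the reduction above to write $Q_2\odot P_2 = P_1 \odot Q_1 \in C(P_1)$. Regularity of $(P_1,P_2)$ then forces $Q_2\in C(P_1)$, so $Q_2 = R\odot P_1$ for some $R\in\mathcal{A}$. Substituting into $P_1\odot Q_1 = P_2\odot Q_2 = P_2\odot R\odot P_1$ and invoking the cancellation property of the Minkowski semigroup (recorded in Section \ref{polysemibas_sect}) yields $Q_1 = R\odot P_2$. Hence $(Q_1,Q_2) = R\odot (P_2,P_1)\in {\rm Kos}(P_1,P_2)$. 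Minor edge cases (some $Q_i=0_{\mathcal{A}}$) collapse to the trivial Koszul multiple by $0_{\mathcal{A}}$.

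For the converse, I would argue contrapositively: assume $(P_1,P_2)$ is not regular, so there exists $Q\in \mathcal{A}$ with $Q\odot P_2 \in C(P_1)$ but $Q\notin C(P_1)$. Write $Q\odot P_2 = R\odot P_1$ for some $R\in\mathcal{A}$. By construction the pair $(R,Q)$ satisfies $P_1\odot R = P_2\odot Q$, so it is a syzygy of $(P_1,P_2)$. If every syzygy were Koszul, we would have $(R,Q) = S\odot(P_2,P_1)$ for some $S$, forcing $Q = S\odot P_1 \in C(P_1)$, a contradiction. Hence some syzygy lies outside ${\rm Kos}(P_1,P_2)$.

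There is no substantive obstacle beyond the initial reduction of the syzygy condition to the equality $P_1\odot Q_1 = P_2\odot Q_2$; once that geometric observation is in hand, the proof reduces to a clean application of the definition of regularity together with cancellation in the Minkowski semigroup. The only part requiring a bit of care is to verify that the two-polytope syzygy condition really is equivalent to polytope equality, which I would justify briefly using that $W$ is the convex hull of the union and that a vertex of $W$ lying in a smaller polytope contained in $W$ must be a vertex of that polytope.
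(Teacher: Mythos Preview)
Your proposal is correct and follows essentially the same approach as the paper: both directions use the reduction of the two-polytope syzygy condition to the equality $P_1\odot Q_1 = P_2\odot Q_2$, then apply regularity and Minkowski cancellation for one direction and a direct contrapositive for the other. You are in fact slightly more explicit than the paper in justifying why the syzygy condition for $r=2$ forces $P_1\odot Q_1 = P_2\odot Q_2$ (the paper simply asserts this ``by definition'') and in handling the $0_{\mathcal{A}}$ edge case, but the substance of the argument is identical.
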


\begin{proof}
Note that for every $(P_1,P_2)$, the semimodule generated by the Koszul syzygy $(P_2,P_1)$ is contained in ${\rm Syz}^{1}(P_1,P_2)$.  

Suppose that $(P_1,~P_2)$ is a regular sequence. Consider a syzygy $(W_1,W_2)$ of $(P_1,P_2)$. By definition, we have $P_1 \odot W_1=P_2 \odot W_2$.  Since,  $(P_1, ~P_2)$ is a regular sequence, we know that $W_2=P_1 \odot T_1$ for some $T_1 \in \mathcal{A}$.  Plugging this into the equation $P_1 \odot W_1=P_2 \odot W_2$, and using the cancellative and commutative properties of Minkowski addition, we deduce that $W_1=P_2 \odot T_1$.   Hence, $(W_1,W_2)=T_1 \odot (P_2,P_1)$ and we conclude that $(W_1,W_2)$ is contained in ${\rm Kos}(P_1,P_2)$. 

Suppose that $(P_1,P_2)$ is not regular. By definition, we know that there exists $W_2 \in \mathcal{A}$ that does not contain $P_1$ as a Minkowski summand and satisfies $P_1 \odot W_1=P_2 \odot W_2$ for some $W_1 \in \mathcal{A}$. Hence,  
$(W_1, W_2)$ is a syzygy of $(P_1,P_2)$. We claim that $(W_1,W_2)$ does not belong to ${\rm Kos}(P_1,P_2)$. To see this, note that the contrary implies that $W_2$ contains $P_1$ as a Minkowski summand.

\end{proof}

\subsection{Arbitrary Number of Polytopes}

 We generalize Theorem \ref{koztwo_thm} to an arbitrary number of polytopes.  Suppose that $(Q_1,\dots,Q_r)$ is a syzygy of polytopes $(P_1,\dots,P_r)$, let $W=\oplus_j(P_j \odot Q_j)$ be the polytope associated to it. Recall that if 
the syzygy is of type-I, then there is an index $i$ such that $W=P_i \odot Q_i$. The set of all  indices for which $W=P_i \odot Q_i$ is called the index set of the type-I syzygy.



\begin{theorem}{\bf (Weak Koszul Property for Polytopes)}\label{kosgen_thm}
Let $(P_1,\dots,P_r)$ be a sequence of polytopes in $\mathcal{A}$. Every type-I syzygy $(Q_1,\dots,Q_r)$ of $(P_1,\dots,P_r)$ is equivalent to a type-I syzygy in ${\rm Kos}(P_1,\dots,P_r)$ whose index set contains the index set of $(Q_1,\dots,Q_r)$ if and only if $(P_1,\dots,P_r)$ is a regular sequence.
\end{theorem}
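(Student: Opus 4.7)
This is short. By contrapositive, suppose $(P_1,\dots,P_r)$ is not regular; pick the smallest $i\ge 2$ witnessing the failure, so there exist $T \notin C(P_1,\dots,P_{i-1})$ and $S_1,\dots,S_{i-1}\in\mathcal{A}$ with $P_i\odot T=\bigoplus_{k<i}S_k\odot P_k$. The tuple $\sigma=(S_1,\dots,S_{i-1},T,0_\mathcal{A},\dots,0_\mathcal{A})$ is then a type-I syzygy with $i$ in its index set: its associated polytope is $P_i\odot T$, and every vertex of $P_i\odot T$ is a vertex both of some $P_k\odot S_k$ (since the $\oplus$ of those summands equals $P_i\odot T$ by the defining relation) and of $P_i\odot T$ itself. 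The Koszul property produces an equivalent Koszul tuple $\tau$ whose index set contains $\{i\}$. Equivalence fixes the zero-support, so the last $r-i$ entries of $\tau$ vanish and every Koszul coefficient attached to a $K_{j,k}$ with $\max(j,k)>i$ must be $0_\mathcal{A}$; the $i$-th entry of $\tau$ therefore lies in $C(P_1,\dots,P_{i-1})$. Combined with $i$ belonging to the index set of $\tau$ and with the cancellativity of the Minkowski semigroup (Section~\ref{polysemibas_sect}), this forces the $i$-th entry of $\tau$ to equal $T$, contradicting $T\notin C(P_1,\dots,P_{i-1})$.

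\textbf{Forward direction (regular $\Rightarrow$ Koszul property).} Let $(Q_1,\dots,Q_r)$ be a type-I syzygy with index set $I$ and nonzero support $S$; fix $i_0\in I$. A purely combinatorial first step, using only the syzygy condition and $i_0\in I$, is the identity
\[
P_{i_0}\odot Q_{i_0} \;=\; \bigoplus_{j\in S\setminus\{i_0\}} P_j\odot Q_j,
\]
because the inclusion $\supseteq$ is automatic from $P_j\odot Q_j\subseteq W$ (Proposition~\ref{vol-prop}) and the syzygy property pins every vertex of $W$ into some $P_j\odot Q_j$ with $j\ne i_0$, giving $\subseteq$. The technical core of the proof is the following decomposition lemma: the regularity hypothesis yields nonzero polytopes $T_{i_0,j}\in\mathcal{A}$, one for each $j\in S\setminus\{i_0\}$, with
\[
Q_{i_0}\;=\;\bigoplus_{j\in S\setminus\{i_0\}}T_{i_0,j}\odot P_j, \qquad T_{i_0,j}\odot P_{i_0}=Q_j \text{ for every } j\in I.
\]
For $j\in I$ the relation $P_j\odot Q_j=P_{i_0}\odot Q_{i_0}\in C(P_{i_0})$ combined with regularity (in the spirit of the $r=2$ analysis of Theorem~\ref{koztwo_thm}) makes $P_{i_0}$ a Minkowski summand of $Q_j$; cancellativity then defines $T_{i_0,j}:=Q_j/P_{i_0}$ uniquely, and one checks $T_{i_0,j}\odot P_j=Q_j\odot P_j/P_{i_0}=W/P_{i_0}=Q_{i_0}$, so the single term $T_{i_0,j}\odot P_j$ already equals $Q_{i_0}$. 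For $j\in S\setminus(I\cup\{i_0\})$ the coefficients are produced via the canonical solution of the equation $Q_{i_0}=\bigoplus_jP_j\odot Y_j$ (Lemma~\ref{cansol_lem}), whose solvability is exactly the identity of the first step after cancelling $P_{i_0}$ on both sides.

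Given the decomposition, set $\tau=\bigoplus_{j\in S\setminus\{i_0\}}T_{i_0,j}\odot K_{i_0,j}$. A direct unfolding of the definition of $K_{i_0,j}$ gives: the $i_0$-th coordinate of $\tau$ is $\bigoplus_jT_{i_0,j}\odot P_j=Q_{i_0}$; the $j$-th coordinate for $j\in S\setminus\{i_0\}$ is $T_{i_0,j}\odot P_{i_0}$, which is nonzero precisely on $S\setminus\{i_0\}$ and equals $Q_j$ whenever $j\in I$; and the associated polytope is $P_{i_0}\odot Q_{i_0}=W$. Hence $\tau\in\mathrm{Kos}(P_1,\dots,P_r)$ is equivalent to $(Q_1,\dots,Q_r)$, is of type I, and has index set containing $I$ since $P_j\odot(T_{i_0,j}\odot P_{i_0})=P_j\odot Q_j=W$ for every $j\in I$. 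The main obstacle throughout is the decomposition lemma itself: reconciling the Minkowski-quotient recipe on the $I$-coordinates with the canonical-solution recipe on $S\setminus(I\cup\{i_0\})$ into a single consistent Minkowski decomposition of $Q_{i_0}$, with the prescribed support, is exactly where the cancellativity and monotonicity-of-measure tools of Section~\ref{polysemibas_sect} must be combined with the full strength of the sequential regularity hypothesis.
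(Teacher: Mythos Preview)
Your backward direction is essentially the paper's argument, and the extra remark that cancellativity forces $\tau_i=T$ is a welcome clarification.

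The forward direction has a genuine gap, and you name it yourself: the ``decomposition lemma'' is asserted but not proved. The specific failure is the step ``regularity \dots\ makes $P_{i_0}$ a Minkowski summand of $Q_j$'' for $j\in I$. Regularity of the full sequence $(P_1,\dots,P_r)$ only says that $Q\odot P_j\in C(P_1,\dots,P_{j-1})$ forces $Q\in C(P_1,\dots,P_{j-1})$; it does \emph{not} say that $Q\odot P_j\in C(P_{i_0})$ forces $Q\in C(P_{i_0})$. The latter is regularity of the \emph{pair} $(P_{i_0},P_j)$, which is not a hypothesis---indeed, pairwise regularity and order-independence are consequences of the theorem you are trying to prove (the Corollary immediately following Theorem~\ref{kosgen_thm}), so invoking them here would be circular. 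Once that step fails, your two recipes (Minkowski quotient on $I$, canonical solution off $I$) have nothing to reconcile, and the final paragraph is only a description of the difficulty, not a resolution.

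The paper avoids this entirely by a single move you gesture at but do not carry out: it replaces $(Q_2,\dots,Q_r)$ wholesale by the $P_1$-canonical solution $\mathcal{C}_{W,P_1}$ of $W=\bigoplus_{j\ge 2}P_j\odot Y_j$ (this exists because $Q_1\in C(P_2,\dots,P_r)$ by regularity, so $W=P_1\odot Q_1$ has a solution with $P_1$ dividing every coordinate). This new tuple is still an equivalent type-I syzygy with at least the same index set, and now Lemma~\ref{cansol_lem} gives $Q_j=L_j\odot P_1$ for \emph{every} $j\ge 2$ in one stroke, where $(L_2,\dots,L_r)=\mathcal{C}_{Q_1}$. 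No separate treatment of $I$-coordinates, no pairwise regularity, and the Koszul expression $\bigoplus_j L_j\odot K_{1,j}$ drops out immediately. That replacement-by-canonical-solution step is the missing idea in your argument.
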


We start with an informal explanation of the main ideas used in the proof. 
\indent

{\bf Main Ideas of the Proof:}  We show that if every type-I syzygy is equivalent to an element in ${\rm Kos}(P_1,\dots,P_r)$  then the sequence is regular by contradiction. Suppose that there the sequence $(P_1,\dots,P_r)$ is not regular then there exists an index $i$ and a polytope $Q \notin C(P_1,\dots,P_{i-1})$  such that $P_i  \odot Q \in C(P_1,\dots,P_{i-1})$. This leads to a type-I syzygy $(Q_1,\dots,Q_r)$ that we show is not equivalent to a type-I syzygy in ${\rm Kos}(P_1,\dots,P_r)$ whose index set contains the index set of $(Q_1,\dots,Q_r)$. 

The converse relies crucially on the existence and uniqueness of the canonical solution discussed in Subsection \ref{eqnssub_subsect}. We start with a type-I syzygy $(Q_1,\dots,Q_r)$ of a regular sequence $(P_1,\dots,P_r)$ with associated polytope $W$. For simplicity, assume that  $(Q_1,\dots,Q_r)$ has full support, the general case can be handled by a simple inductive argument.  Since this is a syzygy $W$, we know that for every $i \in [1,\dots,r]$ we have $W=\oplus_{j \neq i}( P_j \odot Q_j)$. Furthermore, we know that $(Q_1,,\dots,Q_r)$ is a type-I syzygy. Hence, we suppose that  $W=P_1 \odot Q_1$. Furthermore, since $(P_1,\dots,P_r)$ is a regular sequence we know that $Q_1 \in C(P_2,\dots,P_r)$. Hence, we know that the set of $S_{W,P_1}$ solutions to $W=P_2 \odot Y_2 \oplus \dots \oplus Y_r \odot P_r$ such that $P_1$ is a Minkowski summand of each term in $(Y_2,\dots,Y_r)$ is not empty.  Using this information, we assume that $(Q_2,\dots,Q_r)$ is a $P_1$-canonical solution to this equation. We express $Q_1$ as an $\mathcal{A}$-linear combination of $(P_2,\dots,P_r)$ via the canonical solution to $Q_1=P_2 \odot Y_2 \oplus \dots \oplus Y_r \odot P_r$  and use the uniqueness of the $P_1$-canonical solution and Lemma \ref{cansol_lem} to construct a type-I syzygy in ${\rm Kos}(P_1,\dots,P_r)$ whose index set contains the index set of $(Q_1,\dots,Q_r)$.
\begin{proof}
($\Rightarrow$)

Suppose that the sequence $(P_1,\dots,P_r)$ is not regular then there exists an index $i$ and a polytope $Q \notin C(P_1,\dots,P_{i-1})$ such that $P_i  \odot Q \in C(P_1,\dots,P_{i-1})$.  Suppose that $P_i \odot Q= \oplus_{j=1}^{i-1} (P_j \odot R_j)$ then $(R_1,R_2,\dots,R_{i-1},Q,0_{\mathcal{A}},0_{\mathcal{A}},\dots,0_{\mathcal{A}})$ is a syzygy between $\{P_1,\dots,P_i\}$ and the polytope corresponding to it is $P_i \odot Q$. We show that any element equivalent to $(R_1,R_2,\dots,R_{i-1},Q,0_{\mathcal{A}},0_{\mathcal{A}},\dots,0_{\mathcal{A}})$ is not contained in ${\rm Kos}(P_1,\dots,P_r)$.  Assume the contrary and note that the coordinates $j>i+1$ of  $(R_1,R_2,\dots,R_{i-1},Q,0_{\mathcal{A}},0_{\mathcal{A}},\dots,0_{\mathcal{A}})$ are all $0_{\mathcal{A}}$ and this holds for any syzygy equivalent to it. Hence, implies that $(R_1,R_2,\dots,R_{i-1},Q)$ is in ${\rm Kos}(P_1,\dots,P_{i})$.  If there is  a type-I syzygy in ${\rm Kos}(P_1,\dots,P_i)$ that is equivalent to it and whose index set contains the index set of $(R_1,R_2,\dots,R_{i-1},Q)$, then this implies that this syzygy also has $Q$ in its $i$-th coordinate. Furthermore, this implies that $Q \in C(P_1,\dots,P_{i-1})$ and this is a contradiction. Hence, there is no type-I syzygy in ${\rm Kos}(P_1,\dots,P_{r})$ equivalent to $(R_1,R_2,\dots,R_{i-1},Q,0_{\mathcal{A}},0_{\mathcal{A}},\dots,0_{\mathcal{A}})$ and whose index set contains the index set of $(R_1,R_2,\dots,R_{i-1},Q,0_{\mathcal{A}},0_{\mathcal{A}},\dots,0_{\mathcal{A}})$. 

 ($\Leftarrow$)  Suppose that there is a syzygy $(Q_1,\dots,Q_r)$ of a regular sequence $(P_1,\dots,P_r)$ of polytopes.  For simplicity, we assume that the syzygy has full support i.e., no coordinate is $0_{\mathcal{A}}$, this assumption is not necessary, a simple inductive argument will remove this restriction. 
 
 Let $W$ be the polytope corresponding to $(Q_1,\dots,Q_r)$ i.e., $W=P_1\odot Q_1 \oplus P_2 \odot Q_2 \oplus \dots \oplus P_r  \odot Q_r$.  Furthermore, since $(Q_1,\dots,Q_r)$ is a syzygy, every vertex in $W$ is shared by at least two elements in $\{P_i \odot Q_i \}_{i=1}^{r}$. Hence, \begin{center} $W=Q_2 \odot P_2 \oplus Q_3 \odot P_3 \cdots P_r \oplus Q_r$ \end{center}
 
 and $P_1 \odot Q_1 \subseteq W$.
 
Since $(Q_1,\dots,Q_r)$ is a type $1$ syzygy, we have $W=P_i \odot Q_i$ for some $i \in [1,\dots,r]$. We assume that $i=1$ is such an index for  $(Q_1,\dots,Q_r)$.  Since $(P_1 ,\dots,P_r)$ is regular, we conclude that $Q_1 \in C(P_2,\dots,P_r)$. Consider the canonical solution to the equation $Q_1=Y_2 \odot P_2 \oplus \cdots \oplus Y_r \odot P_r$, call it $(L_2,\dots,L_r)$. Hence, $Q_1=L_2 \odot P_2 \oplus L_3 \odot P_2 \cdots \oplus L_r \odot P_r$.  Furthermore, since $\mathcal{M}_{W,P_1} \neq \emptyset$ (recall that $\mathcal{M}_{W,P_1}$ is the set of solutions to $W=Y_2 \odot P_2 \oplus Y_3 \odot P_3 \oplus \dots \oplus Y_r \odot P_r$ with $P_1$ as a Minkowski summand), we assume that $(Q_2,\dots,Q_r)$ is the canonical solution with respect to $P_1$ to the equation $W= P_2  \odot X_2 \oplus P_3 \odot X_3 \cdots \oplus P_r \odot X_r$. This gives a type-I syzygy that is equivalent to $(Q_1,\dots,Q_r)$ and whose same index set contains the index set of $(Q_1,\dots,Q_r)$. Next, we show that it belongs to ${\rm Kos}(P_1,\dots,P_r)$
 
 
 \begin{equation}\label{eqn1}
  Q_1 \odot P_1=  Q_2 \odot P_2 \oplus Q_3 \odot P_3 \dots \oplus  Q_r \odot P_r
 \end{equation}

Plugging into Equation (\ref{eqn1}), we obtain 

\begin{equation}\label{eqn_2}
(L_2\odot P_1) \odot P_2 \oplus (L_3\odot P_1) \odot P_3\oplus \cdots \oplus (L_r \odot P_1) \odot P_r= Q_2 \odot P_2 \oplus  Q_3 \odot P_3 \cdots \oplus  Q_r \odot P_r
 \end{equation}
 
 Using Lemma \ref{cansol_lem}, we conclude that \begin{center} $L_i \odot P_1= Q_i$ for $i$ from $2$ to $r$.\end{center}
 
 Hence, the syzygy $(Q_1,Q_2,\dots,Q_r)=(L_2 \odot P_2 \oplus L_3 \odot P_3 \oplus \cdots \oplus L_r \odot P_r,L_2 \odot P_1,\dots,L_r \odot P_1)$. Hence, this syzygy is in ${\rm Kos}(P_1,\dots,P_r)$. More explicitly, $(Q_1,\dots,Q_r)=L_2 \odot K_{1,2} \oplus L_3 \odot K_{1,3} \oplus \cdots \oplus L_r \odot K_{1,r}$ where $K_{i,j}$ is the Koszul syzygy of the pair $(P_i,P_j)$. 
 


\end{proof}

Since this characterization is independent of the order of the polytopes, we obtain the following as a corollary:

\begin{corollary}
The property of regular sequence of polytopes does not depend on the order of the polytopes. 
\end{corollary}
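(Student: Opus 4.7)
The plan is to derive the corollary as a direct consequence of Theorem \ref{kosgen_thm} by observing that the characterization of regularity it provides is manifestly symmetric in the polytopes. Concretely, I would fix an arbitrary permutation $\sigma$ of $[1,\dots,r]$ and exhibit a natural bijection $\Phi_\sigma$ that sends a syzygy $(Q_1,\dots,Q_r)$ of $(P_1,\dots,P_r)$ to the tuple $(Q_{\sigma(1)},\dots,Q_{\sigma(r)})$, viewed as a syzygy of $(P_{\sigma(1)},\dots,P_{\sigma(r)})$. This is well defined because the condition ``every vertex of $\oplus_j (P_j \odot Q_j)$ is shared by at least two terms $P_j \odot Q_j$'' does not depend on the order in which the indices are listed.

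Next I would check that $\Phi_\sigma$ preserves all the data referenced in Theorem \ref{kosgen_thm}: the associated polytope $W = \oplus_j (P_j \odot Q_j)$ is unchanged; the set of coordinates with $Q_k = 0_{\mathcal{A}}$ is mapped to its image under $\sigma^{-1}$; the type of the syzygy is preserved; and the index set of a type-I syzygy is transported by $\sigma^{-1}$. It follows that $\Phi_\sigma$ is compatible with the equivalence relation on syzygies. Moreover, $\Phi_\sigma$ sends the Koszul syzygy $K_{i,j}$ for the original ordering to the Koszul syzygy $K_{\sigma^{-1}(i),\sigma^{-1}(j)}$ for the permuted ordering, and extends linearly (with respect to the $\mathcal{A}$-action) to an isomorphism between ${\rm Kos}(P_1,\dots,P_r)$ and ${\rm Kos}(P_{\sigma(1)},\dots,P_{\sigma(r)})$.

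Combining these observations, the property ``every type-I syzygy is equivalent to a type-I syzygy in ${\rm Kos}$ whose index set contains that of the original'' is invariant under $\sigma$: a witness for one ordering transports via $\Phi_\sigma$ to a witness for the other. By Theorem \ref{kosgen_thm}, this property is equivalent to regularity, so $(P_1,\dots,P_r)$ is regular if and only if $(P_{\sigma(1)},\dots,P_{\sigma(r)})$ is. Since $\sigma$ was arbitrary, the corollary follows.

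Because Theorem \ref{kosgen_thm} does all the substantive work, there is no real obstacle here; the only content is the routine verification that equivalence of syzygies, the type, the index set, and the Koszul sub-semimodule all transform equivariantly under relabeling. The most delicate bookkeeping is keeping track of how the index set of a type-I syzygy is translated by $\sigma^{-1}$, but this is purely notational rather than a genuine difficulty.
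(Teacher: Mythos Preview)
Your proposal is correct and follows exactly the same approach as the paper: the paper simply remarks that the characterization in Theorem~\ref{kosgen_thm} is independent of the order of the polytopes and deduces the corollary immediately. Your version supplies the routine bookkeeping (the bijection $\Phi_\sigma$ and its compatibility with equivalence, type, index set, and Koszul syzygies) that the paper leaves implicit, but the underlying idea is identical.
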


A natural generalization of Theorem \ref{kosgen_thm} would be to extend this from type-I syzygies to arbitrary syzygies. The proof of the first implication does not change.    
The example in Figure \ref{polysyz} is a counterexample to the converse. We deduce this by noting that the syzygy does not belong to the semimodule generated by the Koszul syzygies and is the only syzygy in its equivalence class.

{\bf Lifting Property:} A regular sequence of polytopes does not quite satisfy the lifting property. More precisely, if $f_1,\dots,f_r$ are polynomials with generic coefficients and with Newton polytopes $P_1,\dots,P_r$ respectively. If $(P_1,\dots,P_r)$ is a regular sequence then $(f_1,\dots,f_r)$ need not be a regular sequence. For an example, consider the Newton polytopes $P_1,P_2$ and $P_3$ of polynomials $\beta_1 y^3+ \beta_2 x^2y, \gamma_1 y^3+\gamma_2 xy^2, \alpha x \in \mathbb{K}[x,y]$ for coefficients $\alpha, \beta_1,\beta_2,\gamma_1,\gamma_2 \in \mathbb{K}$. The sequence $(P_1,P_2,P_3)$ is a regular sequence of polytopes but the sequence $(f_1,f_2,f_3)$ is not regular for any choice of coefficients. 

 In this case, the reason for the violation of the lifting property is that the Newton sub-semimodule of the ideal generated by $f_1$ and $f_2$ is not equal to the sub-semimodule generated by $P_1$ and $P_2$ but only strictly contains it.  In particular, ${\rm New}(x^2y+xy^2)$ is contained in the former and not the latter.
 
 In order to rectify this problem, we can modify the definition of regular sequence of polytopes as follows. For a sequence of polytopes $P_1,\dots,P_r$, let $D(P_1,\dots,P_r)$  be the Newton sub-semimodule of the ideal generated by a sequence of polynomials $f_1,\dots,f_r$ with ${\rm New}(f_i)=P_i$ and with generic coefficients. Note that $D(P_1,\dots,P_r)$ depends only on the polytopes $P_1,\dots,P_r$.
 A sequence of polytopes $(P_1,\dots,P_r)$ is called strongly regular if $P_i \odot Q \in D(P_1,\dots,P_{i-1})$ implies that $Q \in D(P_1,\dots,P_{i-1})$ for every $i$ from $2$ to $r$. The following question arises: 
 
 \begin{question} Is the Koszul property true for a strongly regular sequence of polytopes i.e., is a  sequence of polytopes strongly regular if and only every syzygy is equivalent to an element in ${\rm Kos}(P_1,\dots,P_r)$?. \end{question}


\section{Newton Basis}\label{newbas_sect}

Gr\"obner bases are a useful computational tool to study ideals providing a way to degenerate an ideal into a monomial ideal that carries useful information about the original ideal. In this section, we treat an analogue of Gr\"obner basis for the Newton polytope map. We refer to them as Newton basis.

\begin{definition}\rm{({\bf Newton Basis of an Ideal})}
A subset $S$ of polynomials in $I$ is called a {\bf Newton basis} of $I$ if $\{{\rm New}(p)\}_{p \in S}$ generates ${\rm New}(I)$. A Newton basis $S$ is said to be minimal if no strict subset of $S$ is also a Newton basis of $I$. 
\end{definition}

By Proposition \ref{unigen_thm}, we have

\begin{theorem}
Any Newton Basis of $I$ is a generating set of $I$.
\end{theorem}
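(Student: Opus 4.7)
The approach is to combine Proposition \ref{unigen_thm} with the inductive cancellation argument used in the proof of Corollary \ref{genlift_cor1}. Let $S$ be a Newton basis of $I$. By Proposition \ref{unigen_thm}, the sub-semimodule ${\rm New}(I)$ admits a unique minimal generating set $\{P_r\}_r$; since $\{{\rm New}(p) : p \in S\}$ generates ${\rm New}(I)$, this minimal set is contained in $\{{\rm New}(p) : p \in S\}$. For each $r$ fix some $p_r \in S$ with ${\rm New}(p_r) = P_r$; it then suffices to show that the sub-collection $\{p_r\}_r$ already generates $I$ as an ideal, since any larger $S$ will then do so \emph{a fortiori}.

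To prove that $\{p_r\}_r$ generates $I$, I would follow the inductive scheme of Corollary \ref{genlift_cor1}, whose mechanism does not actually depend on the gradedness of $I$ or on the specific choice of lifts. Given $g \in I$, induct on $N = |{\rm New}(g) \cap \mathbb{Z}^n|$. The base case $N = 1$ forces $g$ to be a scalar multiple of a monomial $x^v$; applying Proposition \ref{vol-prop} to ${\rm New}(g) = \bigoplus_r (P_r \odot Q_r)$ shows that some summand coincides with the single point $\{v\}$, so $g$ is a monomial scalar multiple of the corresponding $p_r$. For the inductive step, choose a vertex $v$ of ${\rm New}(g)$; by Proposition \ref{vol-prop} $v$ must be a vertex of some summand $P_e \odot Q_e$, giving a unique decomposition $v = w + u$ where $w$ is a vertex of $P_e$ and $u$ is a vertex of $Q_e$. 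Since both $x^w$ in $p_e$ and $x^v$ in $g$ have nonzero coefficients, there is a unique scalar $\alpha \in \mathbb{K}$ such that the $x^v$-term of $g - \alpha x^u p_e$ vanishes. The containment ${\rm New}(\alpha x^u p_e) = \{u\} + P_e \subseteq Q_e + P_e \subseteq {\rm New}(g)$ ensures that no new lattice points outside ${\rm New}(g)$ are introduced, and since $v$ is an extreme point, the resulting Newton polytope has strictly fewer than $N$ lattice points. The induction hypothesis then gives $g - \alpha x^u p_e \in \langle S \rangle$, whence $g \in \langle S \rangle$.

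The main obstacle is this cancellation step: a single monomial multiple of $p_e$ must simultaneously kill the $x^v$-term and strictly decrease the lattice-point count. The uniqueness of the decomposition $v = w + u$ at a vertex of a Minkowski summand, together with the containment ${\rm New}(\alpha x^u p_e) \subseteq {\rm New}(g)$ coming from Proposition \ref{vol-prop}, are precisely the tools that make this work without any appeal to a grading on $I$. Everything else in the argument is bookkeeping.
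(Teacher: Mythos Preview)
Your proof is correct and follows exactly the route the paper has in mind: its one-line citation of Proposition~\ref{unigen_thm} is really shorthand for the lifting argument of Corollary~\ref{genlift_cor1}, which you spell out in full and correctly note does not actually require $I$ to be graded. The only quibble is that your detour through the minimal generating set $\{P_r\}_r$ is unnecessary---the inductive cancellation runs just as well directly over all of $S$---and the assertion that the unique minimal set is contained in \emph{every} generating set is slightly more than Proposition~\ref{unigen_thm} literally asserts; but since you can simply skip that reduction, nothing is lost.
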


\begin{algorithm}
\caption{Algorithm for a Minimal Newton Basis}\label{minnewtonbas_alg}

\begin{algorithmic}[1]
\Procedure{Newton Basis} {}

{\bf Input:} A minimal generating set of a graded ideal and an integer $k$.

\\ Let $d_0$ be the minimum degree of any minimal generator.  Set $d=d_0$, $\mathcal{S}_{\rm part}^{(d)}=\emptyset$, $\mathcal{S}^{(d)}=\emptyset$. If $k<d_0$, output empty.

\For{ $d=d_0$, $d=d+1$}

\\  Let $\{n_1,\dots,n_r\}$ be the subset of minimum generators of degree $d$.  Let $\mathcal{S}^{(d)}$ be the union of  $\{n_1,\dots,n_r\}$ with the set $\mathcal{S}_{\rm part}^{(d)}$.

\\ Let $S_i$  be the monomial support of the element $p_i \in \mathcal{S}^{(d)}$. 

\\ For every pair $i \neq j$ and for each monomial $m$ in $S_i \cap S_j$, suppose $\alpha_i$ and $\alpha_j$ are the coefficients of $m$ in $p_i$ and $p_j$ respectively.  Include $\alpha_j \cdot p_i-\alpha_i \cdot p_j$ to 
$\mathcal{S}^{(d)}$.

\\ If $\mathcal{S}^{(d)}$ contains all monomials of degree $d$, then if $d=k$ let $\mathcal{S}^{(k)}$ be the set of all monomials of degree $k$, otherwise let  $\mathcal{S}^{(k)}=\emptyset$ . Exit Iteration. 

\\ Repeat till there are no more such relations between pairs in $\mathcal{S}^{(d)}$.

\\ Compute the minimal subset of  $\mathcal{S}^{(d)}$  according to the partial order given by inclusion on the Newton polytopes. 

\\ Multiply each element of this minimal subset by every variable to obtain the set $\mathcal{S}_{\rm part}^{(d+1)}$. 

\\ Repeat till $d=k$.


\EndFor

{\bf Output:} The set $\mathcal{S}^{(k)}$, these are the elements of a minimal Newton basis of degree $k$.

\EndProcedure
\end{algorithmic}
\end{algorithm}

The tropical basis of $I$ under the trivial valuation is related to the Newton Basis of $I$ in the following way: a tropical basis $\{f_1,\dots,f_r\}$ of $I$ is a subset of $I$ such that the co-dimension one skeleton of the normal fan of the Newton polytope of any element contains the intersection of the co-dimension one skeleta of the normal fan of the Newton polytopes of the elements in  $\{f_1,\dots,f_r\}$. We do not know a more precise relationship between the two.

\indent Newton basis of monomial ideals has the following simple characterization.

\begin{proposition}
Let $M$ be a monomial ideal, then its (unique) monomial minimal generating set is a minimal Newton basis of $M$. 
\end{proposition}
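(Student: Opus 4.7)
The plan is to verify the two clauses separately: that the set $\{m_1,\dots,m_r\}$ of monomial minimal generators of $M$ is a Newton basis, and that no proper subset is one.

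For the first clause, I would take an arbitrary $f \in M$ and construct an explicit semimodule expression for ${\rm New}(f)$ in terms of ${\rm New}(m_1),\dots,{\rm New}(m_r)$. Because $M$ is monomial, every monomial $x^{\gamma}$ appearing in $f$ is divisible by at least one $m_i = x^{\alpha_i}$. For each $i$, define
\[
Q_i := {\rm conv}\bigl(\{\gamma - \alpha_i \mid x^{\gamma} \text{ is a monomial of } f \text{ with } m_i \mid x^{\gamma}\}\bigr),
\]
with $Q_i = 0_{\mathcal{A}}$ if the underlying set is empty. The divisibility $m_i \mid x^{\gamma}$ guarantees $\gamma - \alpha_i \in \mathbb{Z}^n_{\geq 0}$, so $Q_i \in \mathcal{A}$. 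Then $Q_i \odot {\rm New}(m_i)$ is the convex hull of all $x^{\gamma}$-exponents in $f$ divisible by $m_i$, and taking the semiring sum over $i$ recovers the convex hull of all exponents of $f$, which is exactly ${\rm New}(f)$. Thus $\{{\rm New}(m_i)\}$ generates ${\rm New}(M)$.

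For minimality, I would argue by contradiction: suppose some $m_r$ could be dropped, so that ${\rm New}(m_r)$ lies in the sub-semimodule generated by the remaining ${\rm New}(m_i)$. Since ${\rm New}(m_r) = \{\alpha_r\}$ is a single lattice point, any expression
\[
\{\alpha_r\} = \bigoplus_{i \neq r}\bigl(R_i \odot {\rm New}(m_i)\bigr)
\]
forces, by Proposition \ref{vol-prop}, each nonzero summand $R_i \odot {\rm New}(m_i)$ to be contained in $\{\alpha_r\}$, hence to equal it. But $R_i \odot {\rm New}(m_i) = R_i + \{\alpha_i\}$, so $R_i = \{\alpha_r - \alpha_i\}$, which requires $\alpha_r \geq \alpha_i$ coordinate-wise, i.e., $m_i \mid m_r$. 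At least one such $i$ must exist (otherwise the right-hand side is $0_{\mathcal{A}}$), contradicting the minimality of the monomial generating set.

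I do not anticipate a substantive obstacle. The first half is essentially the observation that a monomial ideal admits a ``piecewise'' decomposition of each element along its minimal generators, and the second half is the purely combinatorial fact that points are atomic in the polytope semiring: the only way to realise a lattice point as an $\oplus$-combination of Minkowski translates of other polytopes is by a direct translation, which forces divisibility. The one mild care needed is the verification that the constructed $Q_i$ actually lie in $\mathcal{A}$ (i.e., have non-negative integer vertices), which is exactly where the divisibility hypothesis $m_i \mid x^{\gamma}$ enters.
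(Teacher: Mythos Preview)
Your argument is correct. The paper states this proposition without proof, so there is nothing to compare against; your write-up supplies exactly the routine verification the paper leaves implicit. Both halves are sound: the explicit construction of the $Q_i$ works because ${\rm New}(m_i)$ is a single point (so Minkowski sum with it is just translation by $\alpha_i$), and the minimality step correctly exploits that a lattice point is $\oplus$-indecomposable in $\mathcal{A}$, forcing any contributing summand to witness a divisibility $m_i \mid m_r$.
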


A natural question in this context is whether every ideal has a finite Newton basis. The answer is no in general as the following example shows.

\begin{example}\label{infnewbas_ex}
Consider the lattice ideal $I_{A_n}=\langle x_1-x_2,x_2-x_3,\dots,x_n-x_{n+1}\rangle$ of $\mathbb{K}[x_1,\dots,x_{n+1}]$ associated to the root lattice $A_n$. The set $\{ {\bf x^{u^+}}- {\bf x^{u^-}}|~{\bf u^{+}}-{\bf u^{-}}\}$ 
where ${\bf u^{+}}-{\bf u^{-}}$ is a primitive vector is the unique minimal Newton basis of $I_{A_n}$ and hence, $I_{A_n}$ has no Newton basis of finite cardinality.
\end{example}



\begin{proposition}
Any Artinian ideal $I$ has a finite Newton basis.
\end{proposition}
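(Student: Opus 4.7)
The plan is to show that the sub-semimodule ${\rm New}(I) \subseteq \mathcal{A}[n]$ admits a finite minimal generating set, and then invoke Corollary \ref{genlift_cor1} to lift it to a Newton basis of $I$. Since $I$ is Artinian and graded, there exists $k_0 \in \mathbb{N}$ such that $I_k = \mathbb{K}[x_1,\dots,x_n]_k$ for every $k \geq k_0$. Every lattice polytope in $H_k$ is then the Newton polytope of some element of $I_k$ (e.g.\ the sum of the monomials indexed by its vertices), so ${\rm New}(I)_k = \mathcal{A}[n]_k$ for all $k \geq k_0$.

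For $k < k_0$ I would observe that each graded piece ${\rm New}(I)_k$ is a \emph{finite set}: $I_k$ is a finite-dimensional $\mathbb{K}$-vector space, only finitely many subsets of the finite set of degree-$k$ monomials can serve as supports, and ${\rm New}(f)$ is determined by the support of $f$. A finite semigroup trivially has a finite minimal generating set under $\oplus$.

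The heart of the argument is to show that no minimal semimodule generator of ${\rm New}(I)$ lies in degree $k \geq k_0 + 1$. Given $P \in {\rm New}(I)_k = \mathcal{A}[n]_k$, I would decompose $P = \oplus_{v \in \mathrm{vert}(P)} v$. Each vertex $v$ is a lattice point of $H_k$ with $k \geq 1$, so some coordinate $v_{i_v}$ is at least one, and therefore $v = e_{i_v} \odot (v - e_{i_v})$ with $v - e_{i_v}$ a lattice point of $H_{k-1}$, i.e.\ a zero-dimensional element of $\mathcal{A}[n]_{k-1} = {\rm New}(I)_{k-1}$. Hence
\[
P = \bigoplus_{v \in \mathrm{vert}(P)} \bigl( e_{i_v} \odot (v - e_{i_v}) \bigr)
\]
is an $\mathcal{A}[n]$-semimodule combination of elements of ${\rm New}(I)$ of strictly lower degree with positive-degree coefficients from $\mathcal{A}[n]$, so $P$ is redundant as a generator.

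Putting these pieces together, the unique minimal generating set of ${\rm New}(I)$ provided by Proposition \ref{unigen_thm} is concentrated in the finite range of degrees $0, 1, \dots, k_0$, each of which contributes only finitely many generators, so the full generating set is finite; Corollary \ref{genlift_cor1} then lifts it to the desired finite Newton basis of $I$. The main subtlety I anticipate is precisely the redundancy step: it relies on the equality ${\rm New}(I)_{k-1} = \mathcal{A}[n]_{k-1}$ (which fails at $k=k_0$), so the argument only kicks in one degree past the Artinian threshold, and that is exactly why one must collect generators from all degrees $\leq k_0$ rather than from a single top degree.
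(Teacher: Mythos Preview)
Your proof is correct and follows essentially the same route as the paper's: use the Artinian hypothesis to find a degree $k_0$ beyond which $I$ contains all monomials, and conclude that a Newton basis can be extracted from degrees $\le k_0$, where only finitely many Newton polytopes occur. The paper compresses all of this into two sentences, simply declaring that the elements of $I$ of degree at most $d$ form a finite Newton basis; your version is more careful in that you explicitly verify the redundancy of generators in degrees $\ge k_0+1$ and correctly observe that it is the set of \emph{Newton polytopes} (not the set of polynomials) that is finite in each degree---your invocation of Corollary~\ref{genlift_cor1} is not strictly necessary, since any choice of preimages of the finite generating set of ${\rm New}(I)$ already constitutes a Newton basis by definition.
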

\begin{proof}
Since $I$ is Artinian there exists a sufficiently large degree $d$, say such that all monomials of degree $d$ are contained in $I$. The set of all elements in $I$ of degree at most $d$ is a Newton basis of $I$ and is finite.
\end{proof}

We conclude this section with an algorithm to compute the Newton Basis of a graded ideal.  The main idea is to start with the set of minimal generators of smallest degree and search for linear relations among these generators that ``shrink'' the Newton polytope. For example, if $\{x_1-x_2,x_2-x_3,x_3-x_4\}$ are the set of minimal generators of degree one. We compute the elements $\{x_1-x_3,x_1-x_4,x_2-x_4\}$ whose Newton polytopes does not belong to the sub-semimodule generated by the Newton polytopes of the minimal generators. This gives the elements of the minimal Newton basis of degree one. We then multiply each of these by the variables and take the union with the set of minimal generators of degree two and repeat this procedure to compute elements in the Newton basis of degree two.  Algorithm \ref{minnewtonbas_alg} gives a more precise description.

\footnotesize
\noindent {\bf Author's address:}

\smallskip

\noindent  Department of Mathematics, Indian Institute of Technology Bombay, \\
Powai, Mumbai, Maharashtra 400076, India.\\
{\bf Email id:} m.manjunath@qmul.ac.uk

}\end{document}